\newtheorem{theorem}{Theorem}
\theoremstyle{definition}
\newtheorem{example}{Example}
\newtheorem{lemma}{Lemma}
\newtheorem{remark}{Remark}
\numberwithin{equation}{section}
\newcommand{\Z}{\mathbb{Z}}
\newcommand{\N}{\mathbb{N}}
\def\P{{\mathcal P}}
\begin{document}

\title{A formula for the Dubrovnik polynomial of rational knots}

\author{Carmen Caprau}
\address{Department of Mathematics, California State University, Fresno, CA 93740, USA}
\email{ccaprau@csufresno.edu}
\urladdr{}
\author{Katherine Urabe}
\address{Department of Mathematics, California State University, Fresno, CA 93740, USA}
\email{kturabe@mail.fresnostate.edu}

\date{}
\subjclass[2010]{57M27; 57M25}
\keywords{Kauffman two-variable polynomial of knots and links, rational knots and links, knot invariants}

\begin{abstract}
We provide a formula for the Dubrovnik polynomial of a rational knot in terms of the entries of the tuple associated with a braid-form diagram of the knot. Our calculations can be easily carried out using a computer algebra system.

\end{abstract}
\maketitle

\section{Introduction}\label{sec:intro}

Rational knots and links are the simplest class of alternating links of one or two unknotted components. All knots and links up to ten crossings are either rational or are obtained by inserting  rational tangles into a small number of planar graphs (see~\cite{C}). 
Other names for rational knots are 2-bridge knots and 4-plats. The names rational knot and rational link were coined by John Conway who defined them as numerator closures of rational tangles, which form a basis for their classification. A rational tangle is the result of consecutive twists on neighboring endpoints of two trivial arcs. Rational knots and rational tangles have proved useful in the study of DNA recombination.

Throughout this paper, we refer to knots and links using the generic term `knots'. In~\cite{L}, Lu and Zhong provided and algorithm to compute the 2-variable Kauffman polynomial~\cite{Ka} of unoriented rational knots using Kauffman skein theory and linear algebra techniques. On the other hand, Duzhin and Shkolnikov~\cite{D} gave a formula for the HOMFLY-PT polynomial~\cite{HOMFLY, PT} of oriented rational knots in terms of a continued fraction for the rational number that represents the given knot.

A rational knot admits a diagram in braid form with $n$ sections of twists, from which we can associate an $n$-tuple to the given diagram. Using the properties of braid-form diagrams of rational knots and inspired by the approach in~\cite{D} (namely, deriving a reduction formula and associating to it a computational rooted tree), in this paper we provide a closed-form expression for the 2-variable Kauffman polynomial of a rational knot in terms of the entries in the $n$-tuple representing a braid-form diagram of the knot. We will work with the Dubrovnik version of the 2-variable Kauffman polynomial, called the Dubrovnik polynomial. Due to the nature of the skein relation defining the Dubrovnik polynomial (or the Kauffman polynomial, for that matter), deriving the desired closed-form for this polynomial of rational knots is more challenging than for the case of the HOMFLY-PT polynomial.

The paper is organized as follows: In Section~\ref{sec:rational-knots} we briefly review some properties about rational tangles and rational knots, which are needed for the purpose of this paper. In Section~\ref{sec:Dub} we look at the Dubrovnik polynomial of a rational knot diagram in braid-form and write it in terms of the polynomials associated with diagrams that are still in braid-form but which contain fewer twists. Our key reduction formulas are derived in Section~\ref{sec:reduction} and used in Section~\ref{closed-form} to obtain a closed-form expression that computes the Dubrovnik polynomial of a standard braid-form diagram of a rational knot in terms of the entries of the $n$-tuple associated with the given diagram. We finish with an appendix containing the Mathematica\textsuperscript{\textregistered} code, written by the second named author, that computes the Dubrovnik polynomial of a rational knot diagram in braid-form, based on the formulas obtained in Section~\ref{sec:Dub}.

The paper grew out of the second named author's master's thesis at California State University, Fresno. 
\section{Rational knots and tangles}\label{sec:rational-knots}

\textit{Rational tangles} are a special type of $2$-tangles that are obtained by applying a finite number of consecutive twists of neighboring endpoints starting from the two unknotted arcs $[0]$ or $[\infty]$ (called the \textit{trivial 2-tangles}) depicted in Figure \ref{bt}. An example of a rational tangle diagram is shown in Figure \ref{tsf}.

\begin{figure}[ht] 
\raisebox{-10pt}{\includegraphics[angle=90, height=0.4in]{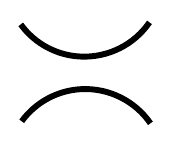}\hskip 30 pt  \includegraphics[height=0.4in]{sm}}
 \put(-80,-20){\fontsize{11}{10}$[\infty]$}
 \put(-18, -20){\fontsize{11}{10}$[0]$}
  \caption{The trivial 2-tangles $[0]$ and $[\infty]$} \label{bt}
\end{figure}

\begin{figure}[ht]
\[\includegraphics[scale=.15]{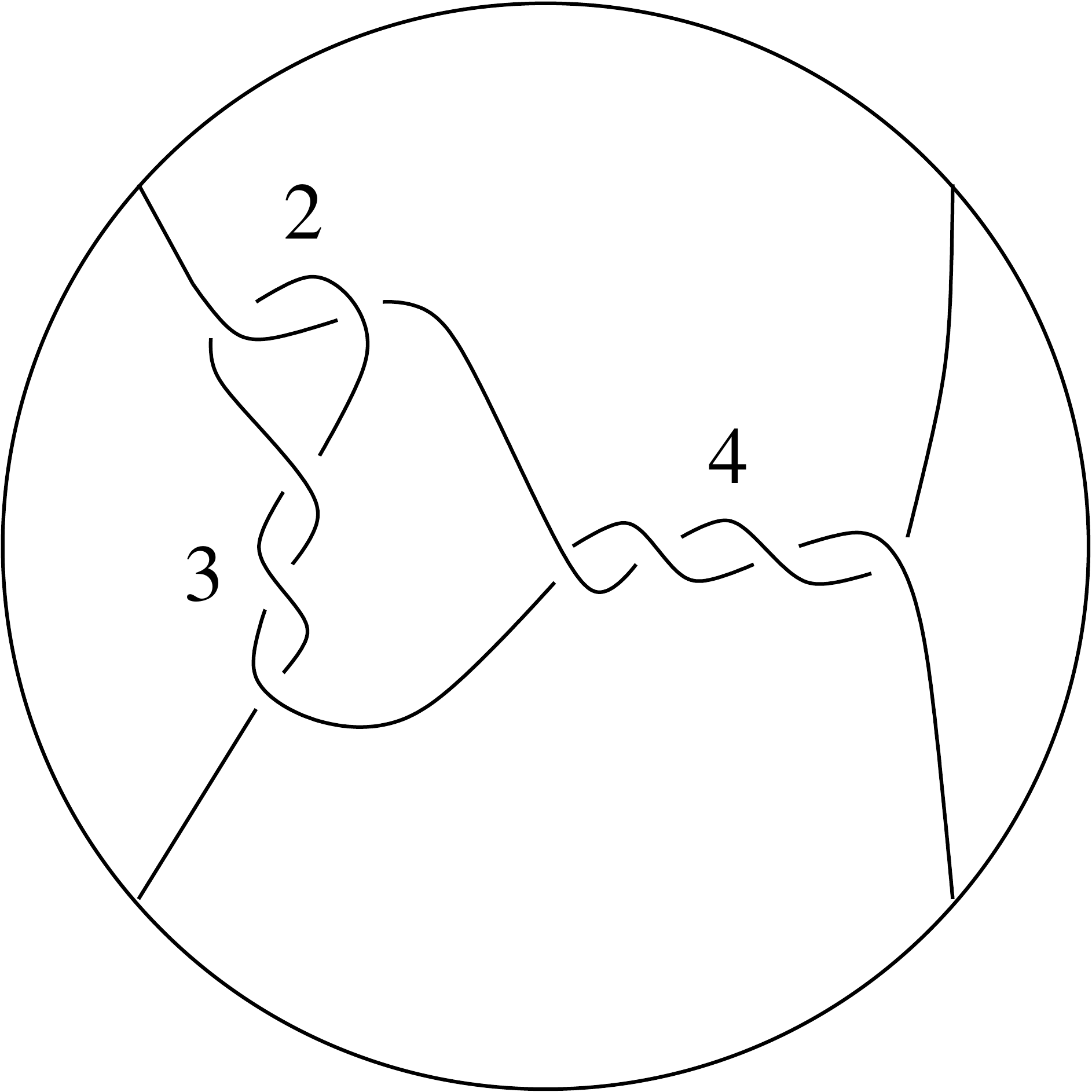}\]
\caption{The rational tangle $T(4, 3, 2)$ in standard form\label{tsf}}
\end{figure}

Taking the \textit{numerator closure}, $N(T)$, or the \textit{denominator closure}, $D(T)$, of a $2$-tangle $T$ (as shown in Figure \ref{closures}) results in a knot or a link. In fact, every knot or link can arise as the numerator closure of some 2-tangle (see \cite{KL2}). However, numerator (or denominator) closures of different rational tangles may result in the same knot. 

Numerator and denominator closures of rational tangles give rise to \textit{rational knots}. These are alternating knots with one or two components. It is an interesting fact that all knots and links up to ten crossings are either rational knots or are obtained from rational knots by inserting rational tangles into simple planar graphs. For readings on rational knots and rational tangles we refer the reader to~\cite{Bu, C, GK, KL1, KL2, Kaw, Mu, R, S}.

\begin{figure}[ht]
\[
\raisebox{-10pt}{\includegraphics[height=1in]{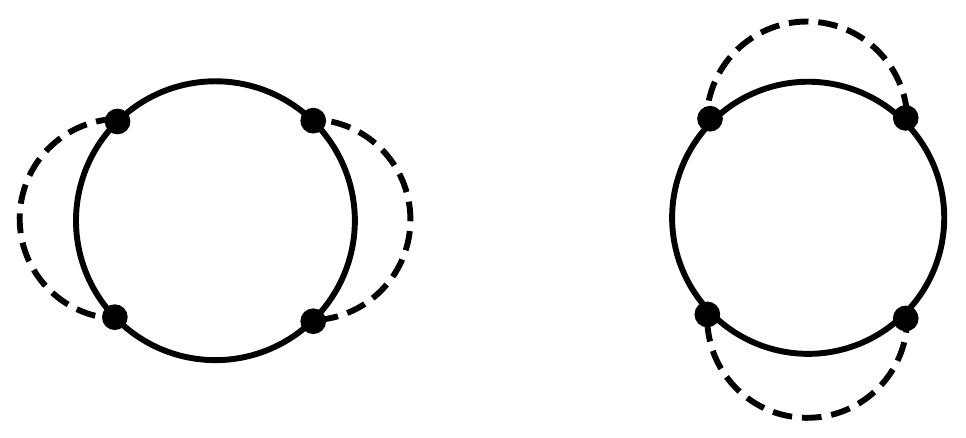}}
 \put(-37,-20){\fontsize{9}{10}N(T)}
  \put(-140, -20){\fontsize{9}{10} D(T)}
    \put(-128, 23){\fontsize{13}{10}T}
   \put(-29, 23){\fontsize{13}{10}T}
 \]
\caption{The denominator and numerator closures of a $2$-tangle $T$} \label{closures}
 \end{figure}
 
Conway~\cite{C} associated to a rational tangle diagram $T$ a unique, reduced rational number (or infinity), $F(T)$, called the \textit{fraction of the tangle}, and showed that two rational tangles are equivalent if and only if they have the same fraction. Specifically, for a rational tangle in standard form, $T(b_1, b_2, \dots, b_n)$, its fraction is calculated by the continued fraction
 \[F(T) = [b_1, b_2, \ldots, b_n]: = \displaystyle b_1+\cfrac{1}{b_2+\ldots+\cfrac{1}{b_{n-1}+\cfrac{1}{b_n}}}\] 
where $b_1 \in \mathbb{Z}$ and $b_2, \ldots, b_n\in \Z\backslash \{0\}$. Proofs of this statement can be found in~\cite{Bu, GK, KL1, Mo}.

A rational knot admits a diagram in braid form, as explained in Figures~\ref{braid-form-odd} and~\ref{braid-form-even}. We denote by $D[b_1, b_2, \dots, b_n]$ a \textit{standard braid-form diagram} (or shortly, \textit{standard diagram}) of a rational knot, where $b_i$'s are integers. 

\begin{figure}[ht]
\raisebox{-10pt}{\includegraphics[height=0.7in]{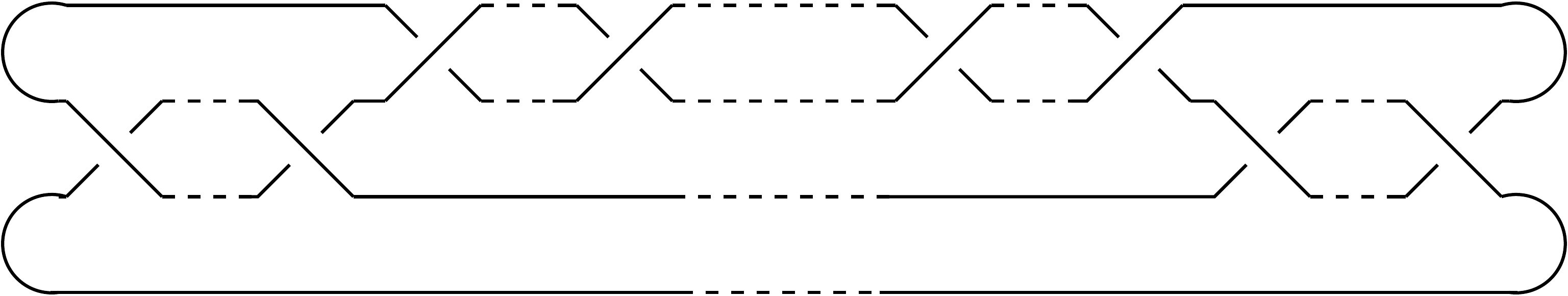}}
\put(-235,13){\fontsize{9}{10}$b_1$}
\put(-181,30){\fontsize{9}{10}$b_2$}
\put(-100,30){\fontsize{9}{10}$b_{n-1}$}
\put(-40,13){\fontsize{9}{10}$b_n$}
\caption{Standard braid-form diagram, $n$ odd }\label{braid-form-odd}
\end{figure}

\begin{figure}[ht]
\raisebox{-10pt}{\includegraphics[height=0.7in]{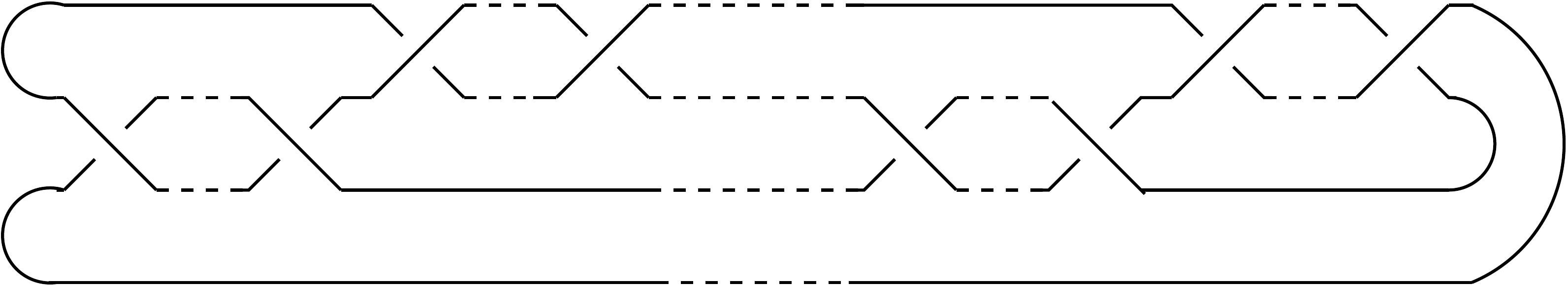}}
\put(-245,13){\fontsize{9}{10}$b_1$}
\put(-190,30){\fontsize{9}{10}$b_2$}
\put(-108,13){\fontsize{9}{10}$b_{n-1}$}
\put(-48,30){\fontsize{9}{10}$b_n$}
\caption{Standard braid-form diagram, $n$ even }\label{braid-form-even}
\end{figure}

A standard diagram of a rational knot is obtained by taking a special closure of a 4-strand braid with $n$ sections of twists, where the number of half-twists in each section is denoted by the integer $|b_i|$ and the sign of $b_i$ is defined as follows: if $i$ is odd, then the left twist (Figure~\ref{twist}) is positive, and if $i$ is even, then the right twist is positive (equivalently, the left twist is negative for $i$ even). In Figures ~\ref{braid-form-odd} and~\ref{braid-form-even} all integers $b_i$ are positive. 

 Note that the special closure for $n$ even is the denominator closure of a rational tangle and for $n$ odd is the numerator closure.

\begin{figure}[ht]
\raisebox{-10pt}{\includegraphics[height=0.3in]{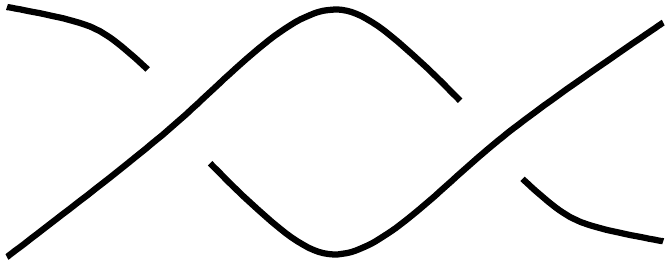}} \hspace{1cm} \reflectbox{\raisebox{-10pt}{\includegraphics[height=0.3in]{right-twist}}} 
\put(-140,-20){\fontsize{9}{10}right twist}
\put(-45,-20){\fontsize{9}{10}left twist}
\caption{}\label{twist}
\end{figure}

If a rational tangle $T(b_1, b_2, \dots, b_n)$ has its fraction a rational number other than $0$ or $\infty$, then we can always find $b_i$ such that the signs of all the $b_i$ are the same (see~\cite{Mu, KL2}). Hence, we assume that a standard diagram of a rational knot is an alternating diagram.

In addition, we can always assume that for a standard diagram $D[b_1, b_2, \dots, b_n]$ of a rational knot, $n$ is odd. This follows from the following properties of a continued fraction expansion $[b_1, b_2, \dots, b_n]$ for a rational number $\frac{p}{q}$ such that $b_i >0$:
\begin{itemize}
\item If $n$ is even and $b_n>1$, then
\[ [b_1, b_2, \dots, b_n] = [b_1, b_2, \dots, b_n-1, 1] \]
\item If $n$ is even and $b_n = 1$, then
\[ [b_1, b_2, \dots, b_n] = [b_1, b_2, \dots, b_{n-1}+1] \]
\end{itemize}

The statement below is well-known (see for example~\cite{Mu}).
\begin{lemma}\label{properties-rknots}
The following statements hold:
\begin{enumerate}
\item $D[-b_1, -b_2, \dots, -b_{2k+1}]$ is the mirror image of $D[b_1, b_2, \dots, b_{2k+1}]$.
\item $D[b_1, b_2, \dots, b_{2k+1}]$ is ambient isotopic to $D[b_{2k+1}, \dots, a_2, a_1]$.
\item $D[b_1, b_2, \dots, b_{2k+1}]$ is ambient isotopic to $N(T(b_1, b_2, \dots, b_{2k+1}))$.
\end{enumerate}
\end{lemma}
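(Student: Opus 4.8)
The plan is to realize each of the three items by an explicit geometric move---a reflection, a rotation, and a closure---and to read off its effect on the twist regions of the standard diagram directly from the sign convention and the braid-form pictures in Figures~\ref{braid-form-odd}--\ref{twist}. For (1), I would start from the observation, already visible in Figure~\ref{twist}, that the left and right twists are mirror images of one another. Passing to the mirror image of a diagram reverses every crossing, hence converts each left twist into a right twist and vice versa, while leaving the number of half-twists in each section unchanged. Now recall the convention: for $i$ odd a left twist counts as $+$, and for $i$ even a right twist counts as $+$. Since reversing all crossings swaps ``left'' and ``right'' in every section while leaving the parity of the index $i$ untouched, the recorded sign of each entry flips, i.e.\ $b_i \mapsto -b_i$ for every $i$. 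This produces exactly $D[-b_1,\dots,-b_{2k+1}]$, proving (1).

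For (2), the move is a rotation of the diagram by $\pi$ about the axis perpendicular to the plane of the page, i.e.\ $(x,y,z)\mapsto(-x,-y,z)$, which is an ambient isotopy of $S^3$. It acts as an orientation-preserving rotation on the page while fixing the normal (over/under) direction, so it preserves every crossing and introduces no sign changes. Geometrically it reverses the left-to-right order of the $n$ twist sections, carrying $D[b_1,\dots,b_{2k+1}]$ to the diagram read as $D[b_{2k+1},\dots,b_1]$. The point where care is needed is the alternating top/bottom placement of the sections: I must verify that, because $n=2k+1$ is odd, positions $j$ and $n+1-j$ always have the same parity, so after the reversal each section still sits at the level dictated by the sign convention, and the result is an honest standard diagram with the \emph{same} signs. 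This parity check is the crux of the argument, and it is exactly where the oddness of $n$ is used; it is also the main obstacle, since one must rule out that the rotation lands on the mirror image or on a diagram with shifted parities.

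For (3), I would invoke the remark preceding the lemma: for $n$ odd the special closure defining the standard braid-form diagram is, by construction, the numerator closure. It then remains to identify the braided stack of twist sections in $D[b_1,\dots,b_{2k+1}]$ with the standard-form rational tangle $T(b_1,\dots,b_{2k+1})$ of Figure~\ref{tsf}. This is the familiar ambient isotopy that ``combs'' a rational tangle from its standard alternating horizontal/vertical form into the $4$-plat (braid) form, matching the $i$-th twist section of the tangle with the $i$-th section of the braid. Performing this isotopy inside the closure and then applying the numerator-closure identification yields $D[b_1,\dots,b_{2k+1}]$ ambient isotopic to $N(T(b_1,\dots,b_{2k+1}))$, completing the proof.
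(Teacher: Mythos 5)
Your arguments for (1) and (3) are sound (the paper itself offers no proof of this lemma, citing Murasugi instead, so there is nothing to compare them against), but (2) fails at exactly the step you single out as the crux. The rotation $(x,y,z)\mapsto(-x,-y,z)$ about the axis perpendicular to the page does reverse the left-to-right order of the twist sections and does preserve every crossing, but it \emph{also exchanges the two horizontal rows}: a section on the bottom row is carried to the top row and vice versa. Your parity computation is correct---for $n=2k+1$ odd, positions $j$ and $n+1-j$ have the same parity---but combined with the row exchange it gives the opposite of what you claim: the old section $b_{n+1-j}$ sat on the bottom row precisely when $n+1-j$ (equivalently $j$) is odd, so in the rotated diagram the section at the odd position $j$ sits on the \emph{top} row. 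Thus every section lands at the level opposite to the one dictated by the standard-form convention (odd positions on the bottom row, as in Figures~\ref{braid-form-odd} and~\ref{braid-form-even}), and what you obtain is the up-down flip of $D[b_{2k+1},\dots,b_1]$, not a standard diagram. Indeed, with your choice of axis the parity match that you invoke is precisely what forces the mismatch; it is for $n$ even that this rotation would respect the levels.

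The repair is easy but essential: rotate instead by $\pi$ about the \emph{vertical axis lying in the plane of the page}, $(x,y,z)\mapsto(-x,y,-z)$. This reverses the left-to-right order while keeping every section on its own row; it switches each crossing as drawn, but the left-right reflection composed with the crossing switch preserves the handedness (left-twist/right-twist type) of every twist region, so all signs are unchanged. With this rotation your parity check does exactly the work you intended, and the image is literally the standard diagram $D[b_{2k+1},\dots,b_1]$. (Alternatively, keep your rotation and compose with a further rotation by $\pi$ about the horizontal in-plane axis to restore the rows; that is again an ambient isotopy preserving handedness, but then you must say so---as written, your claim that the rotated picture ``is an honest standard diagram'' is false.)
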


We denote by $K(p/q)$ a rational knot with standard diagram $D[b_1, b_2, \dots, b_{2k+1}]$, with $b_i \neq 0$, where
\[\frac{p}{q}= [b_1, b_2, \dots, b_{2k+1}], \hspace{0.5cm} \gcd(p, q) = 1 \,\, \text{and} \,\,  p >0.  \]
If all $b_i$ are positive then $q >0$, and if all $b_i$ are negative then $q <0$. The integer $p$ is odd for a knot and even for a two-component link. It is known that two rational knots $K(p/q)$ and $K(p'/q')$ are equivalent if and only if $p = p'$ and $q' \equiv q^{\pm 1} \pmod p$ (see~\cite{Bu,KL1,Mu, S}). 

\section{The Dubrovnik polynomial of rational knots}\label{sec:Dub}
In~\cite{Ka}, Kauffman constructed a 2-variable Laurent polynomial which is an invariant of regular isotopy for unoriented knots. In this paper we work with the Dubrovnik version of Kauffman's polynomial, called the \textit{Dubrovnik polynomial}. 

The Dubrovnik polynomial of a knot $K$, denoted by $P(K) : = P(K)(z, a)$, is uniquely determined by the following axioms: 
\begin{enumerate}
\item[1.] $P(K)=P(K')$ if $K$ and $K'$ are regular isotopic knots.
\item[2.] $P\left(\raisebox{-12pt}{\includegraphics[angle=90,height = 0.4in]{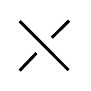}}\right)- P\left(\raisebox{-12pt}{\includegraphics[height = 0.4in]{neg}}\right) = z \left [P\left(\raisebox{-7pt}{\includegraphics[height = 0.3in, width = 0.4in]{sm}}\right) - P\left(\raisebox{-7pt}{\includegraphics[angle=90, height = 0.3in]{sm}}\right) \right]$. 
\item[3.] $P\left(\raisebox{-10pt}{\includegraphics[height = .35in]{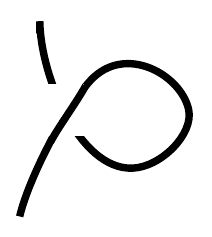}}\right)=a P\left(\raisebox{-10pt}{\includegraphics[height=.35in]{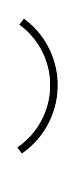}}\right)$ and $P\left(\reflectbox{\raisebox{-10pt}{\includegraphics[ height =.35in]{twista}}}\right)=a^{-1} P\left(\reflectbox{\raisebox{-10pt}{\includegraphics[scale=.5]{untwist}}}\right)$.
\item[4.] $P\left(\raisebox{-6pt}{\includegraphics[scale=.2]{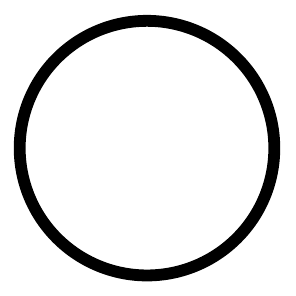}}\right)=1$.
\end{enumerate}
The diagrams in both sides of the second and third axioms above represent larger knot diagrams that are identical, except near a point where they differ as shown. 
We will use the following form for the second axiom;
\[ 
P\left(\raisebox{-12pt}{\includegraphics[angle=90,height = 0.4in]{neg}}\right) = P\left(\raisebox{-12pt}{\includegraphics[height = 0.4in]{neg}}\right) + z P\left(\raisebox{-7pt}{\includegraphics[height = 0.3in, width = 0.4in]{sm}}\right) -z P\left(\raisebox{-7pt}{\includegraphics[angle=90, height = 0.3in]{sm}}\right), 
\] 
and refer to it as the \textit{Dubrovnik skein relation}. Moreover, we will refer to the resulting three knot diagrams in the right-hand side of the Dubrovnik skein relation as the switched-crossing state, the A-state, and the B-state, respectively, of the given knot diagram.

The following statement is well-known and follows easily from the definition of the polynomial invariant $P$.

\begin{lemma}\label{mirror-image}
If $\overline{K}$ is the mirror image of $K$, then $P(\overline{K})(z, a) = P(K)(-z, a^{-1})$.
\end{lemma}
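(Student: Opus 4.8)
The plan is to exploit the uniqueness of the invariant $P$ determined by axioms (1)--(4) by showing that the assignment $K \mapsto P(\overline{K})(z,a)$ satisfies the \emph{same} system of defining relations as $P(K)(-z,a^{-1})$. Concretely, I would set $Q(K) := P(\overline{K})(z,a)$ and $\widetilde{P}(K) := P(K)(-z,a^{-1})$, verify that both are regular-isotopy invariants obeying identical skein, framing, and normalization relations, and then invoke uniqueness to conclude $Q = \widetilde{P}$.

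First I would record how the mirror operation acts on the local pictures appearing in the axioms. Taking the mirror image reflects a diagram through the plane of the page, which switches the over- and under-strands at every crossing while leaving the planar projection, and hence every crossingless smoothing, unchanged. Fixing a diagram $D$ with a distinguished crossing, let $D_+, D_-$ denote the two crossing choices there and $D_A, D_B$ the associated A-state and B-state. Then mirroring interchanges $D_+$ with its switched-crossing state $D_-$, sends a positive curl to a negative curl and conversely, but \emph{fixes} both $D_A$ and $D_B$, since these are planar and carry no crossing at the distinguished spot.

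Next I would translate these observations into relations for $Q$. Applying the Dubrovnik skein relation to $\overline{D}$ at the distinguished crossing and solving for the term coming from $\overline{D_+}=(\overline{D})_-$, the interchange of the crossing with its switched-crossing state produces the skein relation for $Q$ with $z$ replaced by $-z$. Similarly, axiom (3) together with the positive/negative curl interchange gives the framing relations for $Q$ with $a$ replaced by $a^{-1}$, and axiom (4) is unchanged because the unknot is amphichiral, so $Q(\text{unknot})=1$. These are precisely the relations obtained by substituting $(z,a)\mapsto(-z,a^{-1})$ into axioms (1)--(4), that is, the relations satisfied by $\widetilde{P}$.

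Finally, performing the change of variables $z' = -z$ and $a' = a^{-1}$ turns the modified system for $Q$ back into the original Dubrovnik axiom system in the variables $(z',a')$; since that system determines the invariant uniquely, $Q(K)(z',a') = P(K)(z',a')$, which is the desired identity. The steps requiring the most care are the geometric bookkeeping in the third paragraph, namely confirming that the two smoothings are genuinely fixed (rather than swapped) by the reflection while the crossing type and the curls flip, and making explicit that uniqueness survives the substitution, i.e. that the inductive reduction of an arbitrary diagram to unknots via the modified relations remains well defined.
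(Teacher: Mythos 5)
Your proof is correct and is exactly the argument the paper has in mind: the paper omits the details, stating only that the lemma ``follows easily from the definition of the polynomial invariant $P$,'' and your verification---that $K \mapsto P(\overline{K})(z,a)$ satisfies the defining axioms with $z$ replaced by $-z$ and $a$ by $a^{-1}$ (crossings and curls flip under mirroring while both smoothings are fixed), followed by uniqueness---is the standard way to make that precise. No gaps; the geometric bookkeeping you flag in your final paragraph is handled correctly.
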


For more details about the Kauffman polynomial and the Dubrovnik version of it we refer the reader to~\cite{Ka, Ka1}. 

The goal of the paper is to give an algorithm which computes the Dubrovnik polynomial of a standard diagram $D[b_1, b_2, \dots, b_n]$ for a rational knot. We will use the following notation:
\[\P[b_1, b_2, \dots, b_n]:=  P(D[b_1, b_2, \dots, b_n]). \]

We will focus on the case with positive integers $b_i$. The case with negative integers $b_i$ follows from Lemma~\ref{properties-rknots} and Lemma~\ref{mirror-image}. For a standard diagram $D[b_1, b_2, \dots, b_n]$ of a rational knot, we call the integer $n$ the \textit{length} of the diagram.

We consider first a standard diagram of length three, $D[b_1, b_2, b_3]$, where $b_i\geq 3$ for $1\leq i\leq 3$. We obtain the tree diagram depicted in Figure~\ref{3tree}, whose edges are labeled by the weights of the polynomial evaluations of the resulting knot diagrams obtained by applying the Dubrovnik skein relation at the leftmost crossing in the original diagram. 
\begin{figure}[ht]
\raisebox{-10pt}{\includegraphics[height =1.6in]{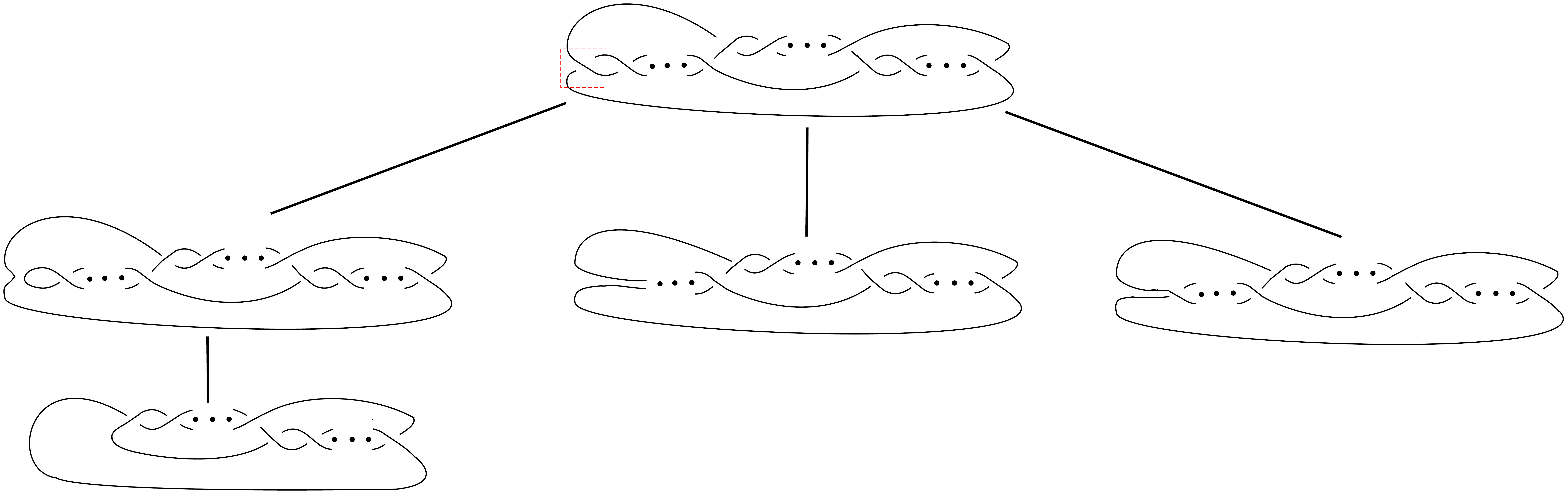}}
 \put(-100,42){\fontsize{9}{10}$b_1-1$}
 \put(-220,95){\fontsize{9}{10}$b_1$}
  \put(-360,45){\fontsize{9}{10}$b_1-1$}
  \put(-90, 70){\fontsize{9}{10}$z$}
    \put(-225, 45){\fontsize{9}{10}$ b_1-2$}
 \put(-282, 70){\fontsize{9}{10}$-z$}
    \put(-313, 17){\fontsize{9}{10}$a^{-b_1+1}$}
\caption{A tree diagram for $D[b_1, b_2, b_3]$} \label{3tree}
 \end{figure}

Note that the middle leaf of the tree in Figure~\ref{3tree} is obtained after applying a type II Reidemeister move. Moreover, the diagram at the bottom of the left-hand branch of the tree can be modified to a standard braid-form diagram, as exemplified in Figure~\ref{3treetrans}.
\begin{figure}[ht]
\raisebox{-10pt}{\includegraphics[scale=.20]{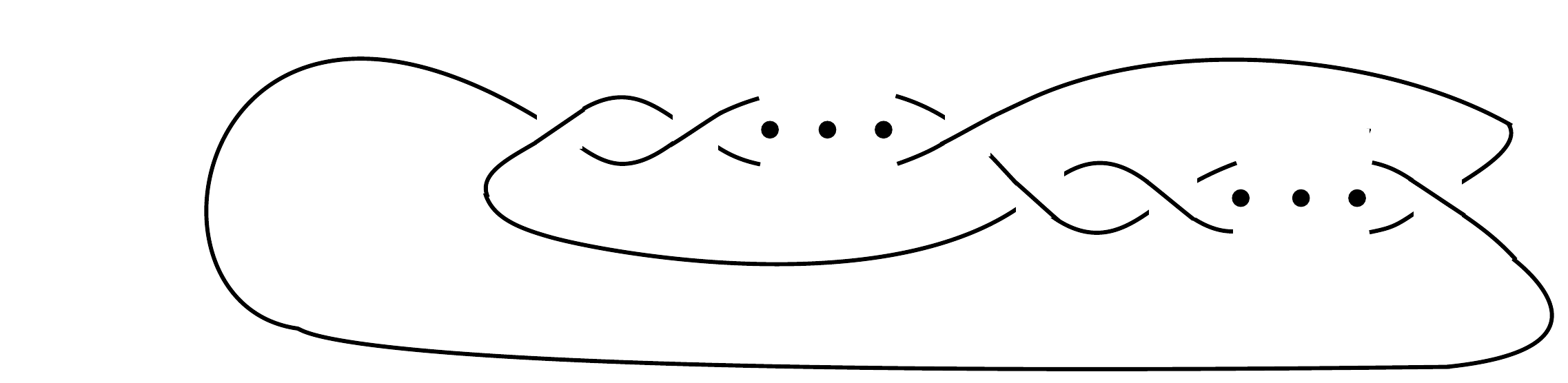} \hspace{35pt}\includegraphics[scale=.20]{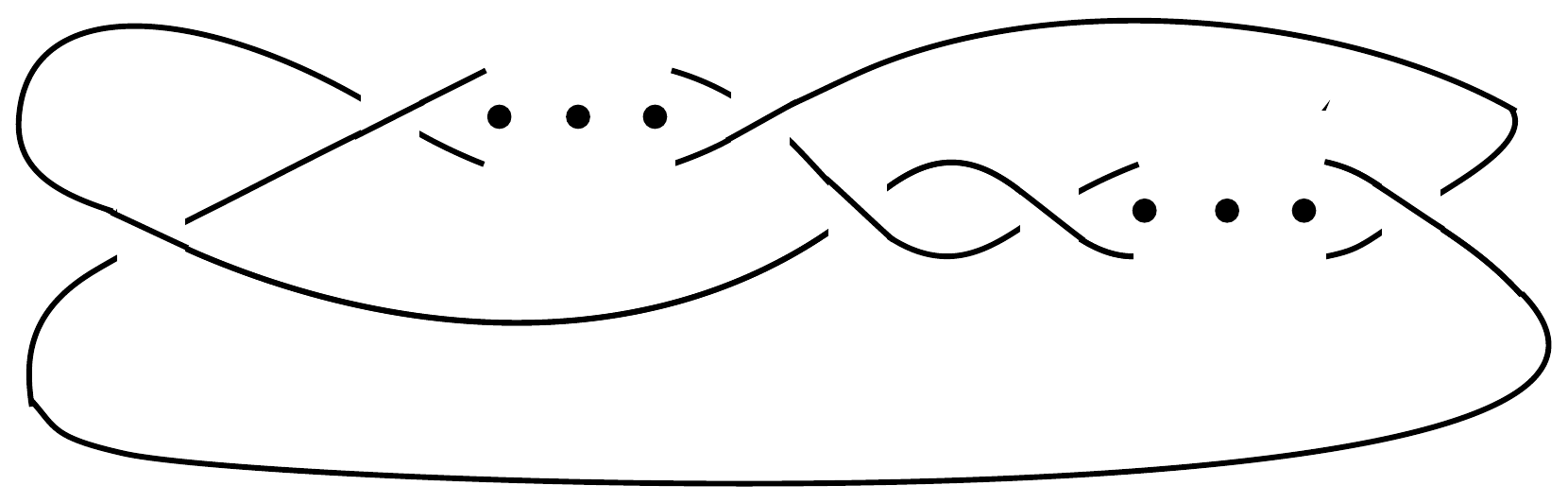}}
 \put(-210,20){\fontsize{9}{10}$ b_2$}
 \put(-140, 3){\fontsize{9}{10}$\longrightarrow $}
   \put(-75, 21){\fontsize{9}{10}$ b_2-1$}
  \put(-87, 10){\fontsize{9}{10}$ 1$}
\caption{Obtaining a standard braid-form diagram}\label{3treetrans}
\end{figure}
We obtain the following recursive relation for $\P[b_1, b_2, b_3]$, where $b_i\geq 3$ for $1\leq i\leq 3$:
\[  \P[ {b_1, b_2, b_3}]=
  \P[ {b_1 - 2, b_2, b_3}] - 
   z a^{-b_1 + 1} \P[ {1, b_2 - 1, b_3}]+z \P[ {b_1 - 1, b_2, b_3}].
\]
These types of recursive formulas form the foundation of our later work. In general, as in the case above, the switched-crossing state reduces a section by 2 half-twists, the A-state reduces a section by one half-twist, and the B-state reduces the diagram by one section of twists while contributing some power of $a$. Because of this, the remainder of our cases will deal with standard braid-form diagrams that have 1 or 2 half-twists in the first section of twists (corresponding to $b_1$), since these relations will completely reduce the first section. In addition, the move shown in Figure~\ref{3treetrans} will be commonly referred to as the \textit{sliding move} and not shown in detail for the other cases. 

Consider now a standard diagram $D[1,1,b_3]$ and the tree given in Figure \ref{1tree}.
\begin{figure}[ht]
\raisebox{-10pt}{\includegraphics[height=1.5in]{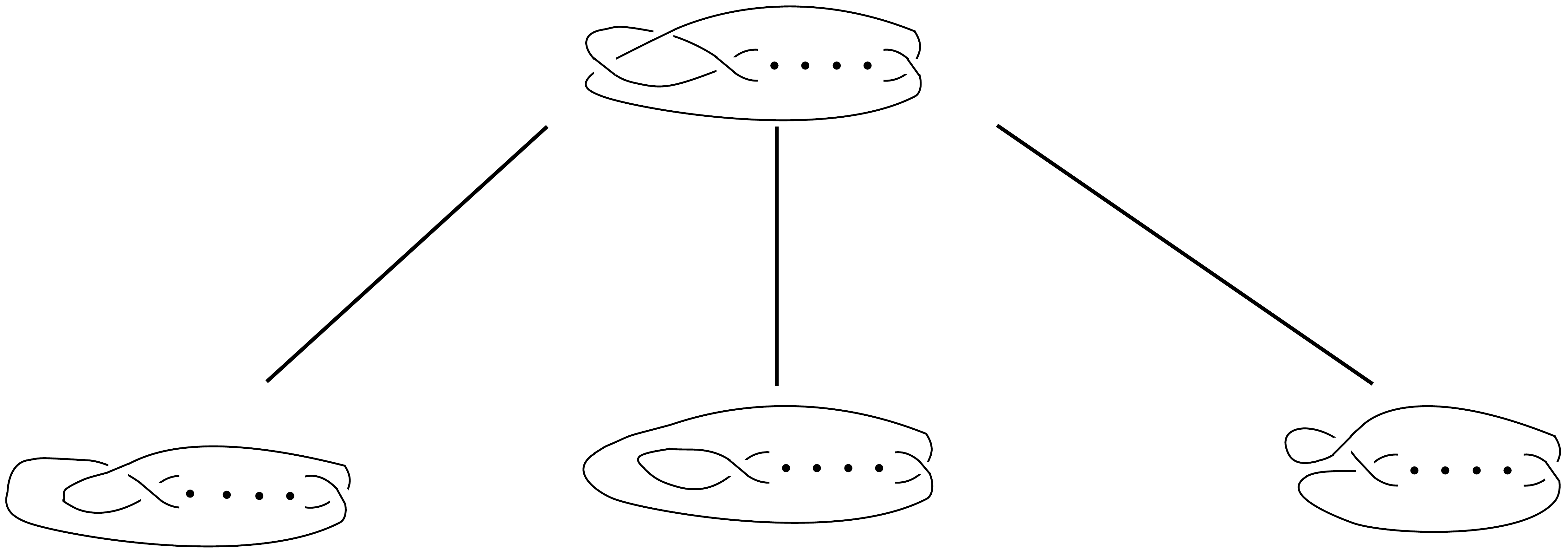}}
  \put(-244, 49){\fontsize{9}{10} $-z$}
    \put(-70, 49){\fontsize{9}{10}$z$}
\caption{A tree diagram for $D[1,1,b_3]$ \label{1tree}}
 \end{figure}

Observe that we applied a type II Reidemeister move to arrive at the diagram representing the middle leaf of the tree. We easily see that the Dubrovnik polynomial for the diagram $D[1,1,b_3]$ satisfies the following relation:
\[\P[ {1, 1, b_3}]  = a^{-b_3} - z \P[ {b_3 + 1}] +z a \P[ {b_3}].\]

Next we consider the standard braid-form diagram $D[2,b_2,b_3]$.
Using the tree for the diagram $D[2,b_2,b_3]$ depicted in Figure \ref{2tree} and the sliding move for the diagram representing the leftmost leaf in the tree, we obtain the following polynomial expression:
 \[ \P[ {2, b_2, b_3}] =  a^{b_2} \P[{b_3}] -  z a^{-1} \P[ {1, b_2 - 1, b_3}] +z  \P[ {1, b_2, b_3}].\]

\begin{figure}[ht]
\raisebox{-10pt}{\includegraphics[height=1.3in]{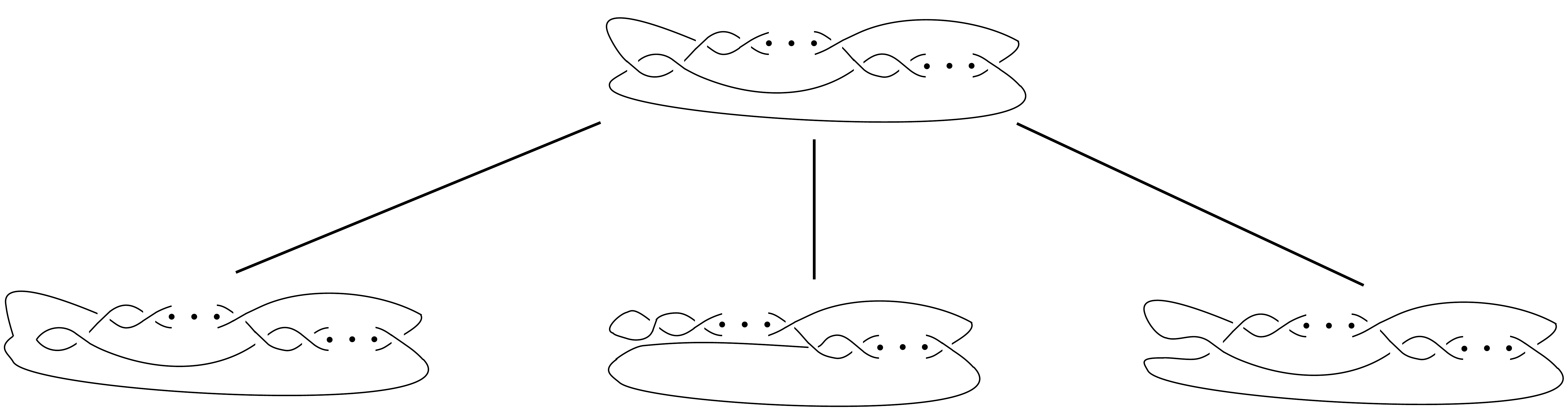}}
  \put(-282, 42){\fontsize{9}{10} $-z$}
    \put(-90, 42){\fontsize{9}{10} $z$}
\caption{A tree diagram for $D[2, b_2 ,b_3]$ \label{2tree}}
 \end{figure}

Lastly, consider the tree for the standard braid-form diagram $D[1,b_2,b_3]$ given in Figure \ref{other1}. By applying the sliding move to the first two leaves in the tree, the Dubrovnik polynomial of $D[1,b_2,b_3]$ satisfies the expression given below:
\[\P[ {1, b_2, b_3}] = \P[{1, b_2 - 2, b_3}] - z \P[ {1, b_2 - 1, b_3}] +z a^{b_2} \P[{b_3}]).\]

\begin{figure}[ht]
\raisebox{-10pt}{\includegraphics[height=1.2in]{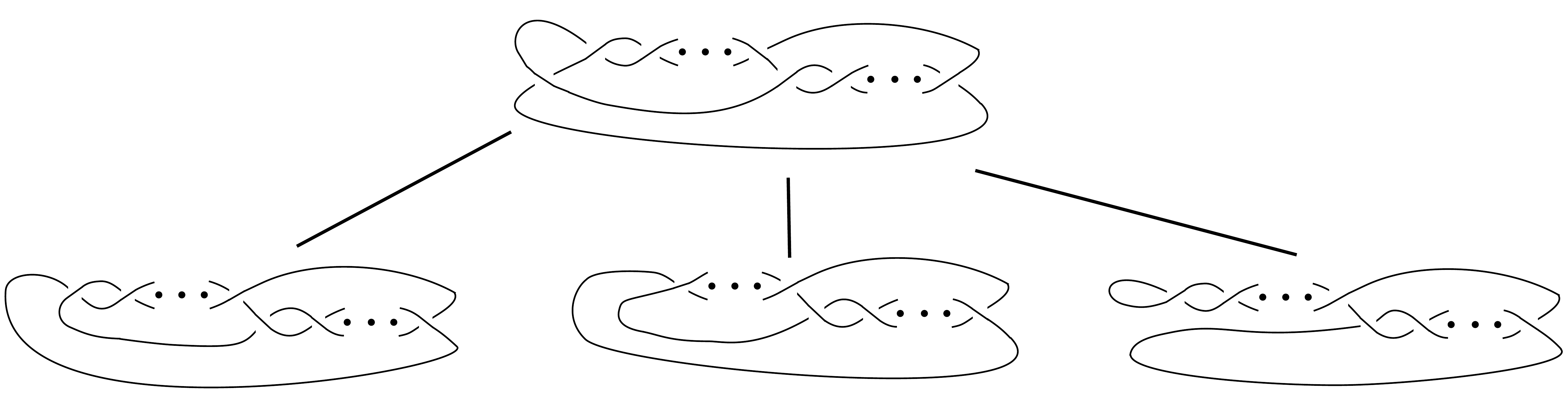}}
 \put(-277, 12){\fontsize{9}{10}$ b_3$}
 \put(-315, 18){\fontsize{9}{10}$ b_2$}
\put(-77,17){\fontsize{9}{10}$b_2$}
  \put(-27,12){\fontsize{9}{10}$b_3$}
\put(-93,33){\fontsize{9}{10}$z$}
  \put(-273, 37){\fontsize{9}{10}$ -z$}
\put(-205,71){\fontsize{9}{10}$b_2$}
  \put(-160, 65){\fontsize{9}{10}$ b_3$}
\put(-200,21){\fontsize{9}{10}$b_2-1$}
  \put(-149, 14){\fontsize{9}{10}$ b_3$}
\caption{A tree diagram for $D[1, b_2 ,b_3]$ \label{other1}}
 \end{figure}

There are a few more cases that need to be considered, namely when $b_1$ or $b_2$ are equal to 1 or 2. These cases are treated in a similar way as above. We collect these cases and those shown above in the following statement. 

\begin{lemma}\label{algorithm}
The Dubrovnik polynomial of a standard braid-form diagram of length 3 with positive twists satisfies the following relations:
\begin{itemize}
\item[i.] $ \P[ {b_1, b_2, b_3}] =\P[ {b_1 - 2, b_2, b_3}] -z a^{1- b_1} \P[ {1, b_2 - 1, b_3}] + z \P[ {b_1 - 1, b_2, b_3}]$, for $b_1 \geq 3, b_2 \geq 2$.
\item[ii.]$ \P[ {2, b_2, b_3}] =  a^{b_2} \P[{b_3}] - z a^{-1} \P[ {1, b_2 - 1, b_3}] +z \P[ {1, b_2, b_3}]$, for $b_2 \geq 2$.
\item[iii.] $\P[ {1, b_2, b_3}] = \P[{1, b_2 - 2, b_3}] - z \P[ {1, b_2 - 1, b_3}] +z a^{b_2} \P[{b_3}]$, for $b_2 \geq 3$.
\item[iv.] $ \P[ {b_1, 1, b_3}] =  \P[ {b_1 - 2, 1, b_3}] - z a^{1 - b_1} \P[ {b_3 + 1}] +z \P[{b_1 - 1, 1, b_3}]$, for $b_1 \geq 3$.
\item[v.] $ \P[ {2, 1, b_3}] = a \P[{b_3}] - z a^{-1} \P[ {b_3 + 1}] +z \P[ {1, 1, b_3}]$.
\item[vi.] $\P[ {1, 2, b_3}]  =  \P[ {b_3 + 1}]- z \P[ {1, 1, b_3}] +z a^2 \P[ {b_3}]$.
\item[vii.] $\P[ {1, 1, b_3}]  = a^{-b_3} - z \P[ {b_3 + 1}] +z a \P[ {b_3}]$.
\end{itemize}
\end{lemma}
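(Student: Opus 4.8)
The plan is to prove all seven identities by one uniform move and then read off the weights. In every case I would resolve, via the Dubrovnik skein relation applied at the leftmost crossing of the first reducible section, and then normalise each of the three resulting pictures back into a standard braid-form diagram. Concretely, if $b_1 \geq 2$ the crossing to resolve lies in the $b_1$-section (cases~i, ii, iv, v), while if $b_1 = 1$ a single crossing cannot be reduced on its own, so I would resolve instead in the $b_2$-section (cases~iii, vi, vii). The skein relation always produces exactly three terms --- the switched-crossing state, the A-state, and the B-state --- and the whole content of the lemma is the assertion that, after Reidemeister moves, these three states are the standard diagrams displayed on the right-hand sides, carrying the stated monomial weights.

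For the generic configurations this is exactly the computation indicated in the text preceding the statement (Figures~\ref{3tree}, \ref{2tree}, \ref{other1}, \ref{1tree}): the switched-crossing state cancels the resolved crossing against its neighbour by a type~II Reidemeister move and so lowers that section's twist count by $2$; the A-state lowers it by $1$; and the B-state (together, when $b_1 = 2$, with the fully cancelled switched state) separates an entire section into a chain of curls, which are removed one by one via type~I Reidemeister moves, each contributing a factor $a^{\pm 1}$ by axiom~3, after which the sliding move of Figure~\ref{3treetrans} restores standard form. Two things demand care. \textbf{(a)} The exponent of $a$ must be obtained by counting the curls removed and fixing the sign of each $a^{\pm1}$ from axiom~3, which gives $a^{1-b}$ when an odd-indexed section of $b$ crossings is unravelled and $a^{\,b}$ when an even-indexed one is. \textbf{(b)} The signs of the two $z$-terms flip with the parity of the resolved section, since odd-indexed sections use the left-handed and even-indexed sections the right-handed crossing convention, so the positive and negative crossings of axiom~2 interchange; this is precisely why the A-state carries $+z$ in case~i but $-z$ in case~iii, with the B-state behaving oppositely.

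The boundary cases iv--vii I would then derive as the degenerate specializations of the generic ones, in which the generic output would otherwise demand a section with zero or negative twists. The only extra ingredient needed is the collapsing identity $\P[1, 0, b_3] = \P[b_3 + 1]$ (a section carrying no crossing lets its two neighbours merge additively) together with the fact that a picture left with no essential crossings is an unknot bearing only curls, which axioms~3 and~4 evaluate to a pure power of $a$. Thus case~iv is case~i at $b_2 = 1$ after rewriting $\P[1, 0, b_3] = \P[b_3 + 1]$ (this is also why case~i needs $b_2 \geq 2$, to keep $\P[1, b_2 - 1, b_3]$ a genuine standard diagram); cases~v and~vi are the $b_2 = 1$ and $b_1 = 1$ specializations of cases~ii and~iii; and case~vii is the doubly degenerate configuration whose switched-crossing state collapses all the way to the $b_3$-curled unknot, producing the constant $a^{-b_3}$.

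I expect the difficulty to be bookkeeping rather than conceptual: confirming in each degenerate case that the type~I simplifications give exactly the claimed power of $a$ with the correct exponent sign, and that each collapsed state is the asserted standard diagram rather than its mirror image or a diagram differing by framing. To control sign errors I would cross-check every boundary identity against the corresponding generic identity under $b_i = 1$ or $b_i = 2$, demanding agreement wherever both are defined; this mutual consistency among the seven formulas is the main evidence that the exponents and signs have been assigned correctly.
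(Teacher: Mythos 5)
Your proposal is correct and follows essentially the same route as the paper: the paper's own proof consists precisely of applying the Dubrovnik skein relation at the leftmost crossing (in the $b_1$-section when $b_1\geq 2$, and effectively in the $b_2$-section when $b_1=1$), normalizing the three states into standard braid form via type~I/II Reidemeister moves and the sliding move of Figure~\ref{3treetrans}, with the degenerate cases iv--vi handled ``in a similar way,'' exactly as you do via the collapsing identity $\P[1,0,b_3]=\P[b_3+1]$. Your parity-based sign bookkeeping and curl-counting for the $a$-exponents match the paper's computations (Figures~\ref{3tree}--\ref{other1}), so no gap remains.
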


Note that the algorithm that allowed us to arrive at Lemma~\ref{algorithm} involved only the two leftmost groups of twists in a diagram of length 3. Therefore, the above cases can be generalized to a standard braid-form diagram of any odd length $n$ with positive half-twists $b_i$.

The one extra case we need to consider before we generalize the statement in Lemma~\ref{algorithm} is the standard diagram of length 5, $D[1,1,b_3,1,b_5]$, shown in Figure \ref{case111}.
\begin{figure}[ht]
\raisebox{-10pt}{\includegraphics[scale=.35]{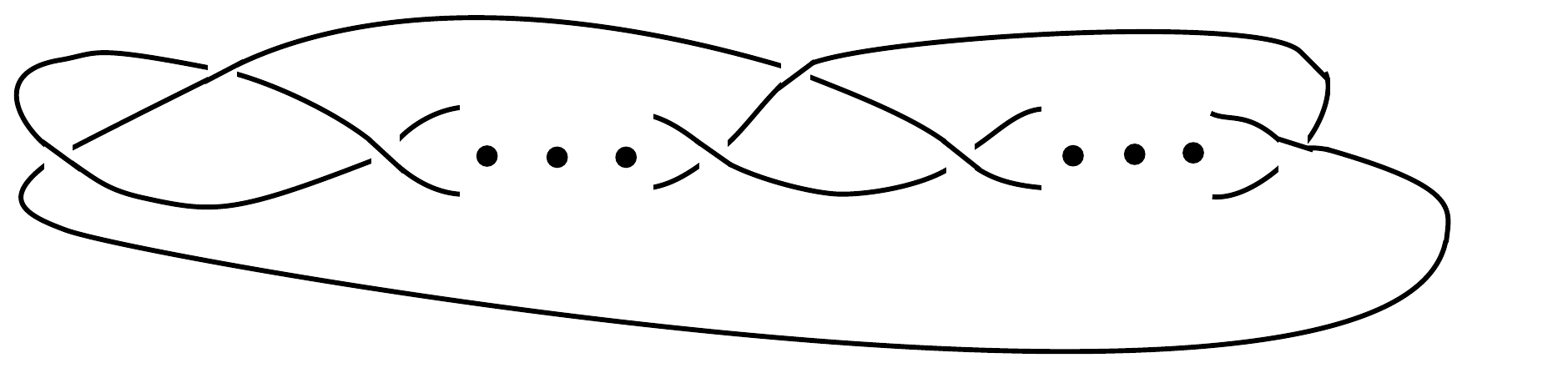}}
 \put(-170,30){\fontsize{9}{10}$ 1$}
  \put(-55, 25){\fontsize{9}{10}$ b_5$}
    \put(-183, 20){\fontsize{9}{10}$ 1$}
  \put(-95, 30){\fontsize{9}{10}$ 1$}
    \put(-124, 22){\fontsize{9}{10}$ b_3$}
\caption{Standard diagram $D[1,1, b_3, 1, b_5]$}\label{case111}
 \end{figure}
 
The Dubrovnik skein relation applied to the leftmost crossing in the diagram $D[1,1,b_3,1,b_5]$ yields the following expression:
 \[\P[{1, 1, b_3, 1, b_5}]=
   a^{-b_3} \P[{1 + b_5}] - z \P[{b_3 + 1, 1, b_5}] +z a \P[{b_3, 1, b_5}].\]

\begin{theorem}\label{algorithm-thm}
The Dubrovnik polynomial of a standard braid-form diagram $D[b_1, b_2, \ldots, b_n]$ with $n$ odd and positive twists satisfies the following relations:

\begin{eqnarray*} 
\P[b_1, b_2, \ldots, b_n] &=& \P[b_1 - 2, b_2,\ldots, b_n] -  z a^{1 -b_1} \P[1, b_2 - 1, \ldots, b_n] \\
 &&  +z \P[b_1 - 1, b_2, \ldots, b_n], \,\,\, \text{for} \,\, \,b_1 \geq 3, b_2 \geq 2.\\
\P[2, b_2, \ldots, b_n] &=& a^{b_2} \P[b_3\ldots, b_n] - z a^{-1} \P[1, b_2 - 1,\ldots, b_n] \\
&&+z \P[1, b_2, \ldots, b_n], \,\,\, \text{for}\,\,\, b_2 \geq 2.\\
 \P[1, b_2, \ldots, b_n] &=& \P[1, b_2 - 2, \ldots, b_n] -   z \P[1, b_2 - 1, \ldots, b_n] \\
 &&+z a^{b_2} \P[b_3\ldots, b_n], \,\,\, \text{for} \,\,\, b_2 \geq 3.\\
 \P[b_1, 1, b_3, \ldots, b_n] &=&  \P[b_1 - 2, 1, b_3,\ldots, b_n] -  z a^{1 - b_1} \P[b_3 + 1, \ldots, b_n]\\
 && +z \P[b_1 - 1, 1, b_3, \ldots, b_n] \,\,\, \text{for}\,\,\, b_1 \geq 3.\\
\P[2, 1, b_3, \ldots, b_n] &=& a \P[b_3, \ldots, b_n] -   z a^{-1} \P[b_3 + 1, \ldots, b_n] +z  \P[1, 1, b_3, \ldots, b_n]\\
\P[1, 2, b_3, \ldots, b_n] &=& \P[b_3 + 1,\ldots, b_n] -  z \P[1, 1, b_3, \ldots, b_n] +z a^2 \P[b_3, \ldots, b_n]\\
\P[1, 1, b_3, b_4, \ldots, b_n] &=& a^{-b_3} \P[1, b_4 - 1, \ldots, b_n] - z \P[b_3 + 1, b_4, \ldots, b_n]\\
&& +z a \P[b_3, b_4, \ldots, b_n] \,\,\, \text{for} \,\,\, b_4 \geq 2.\\
\P[1, 1, b_3, 1, b_5, \ldots, b_n] &=& a^{-b_3} \P[1 + b_5, \ldots, b_n] -z \P[b_3 + 1, 1, b_5, \ldots, b_n] \\
&&+za \P[b_3, 1, b_5, \ldots, b_n].
\end{eqnarray*}

\end{theorem}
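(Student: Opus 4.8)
The plan is to observe that every diagrammatic manipulation used to establish Lemma~\ref{algorithm} takes place inside a bounded neighborhood of the two leftmost sections of twists, and that both the Dubrovnik skein relation and the Reidemeister moves are local operations. Consequently, the entire computation that produced each of the seven identities in Lemma~\ref{algorithm} can be repeated verbatim for a diagram of arbitrary odd length $n$, with the trailing sections $b_3, \ldots, b_n$ simply carried along, untouched, through the switched-crossing state, the A-state, and the B-state. Each identity is thus a single skein step rather than an inductive argument; the recursion is only invoked later, when these relations are assembled into a closed form.

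First I would treat the six relations corresponding to cases (i)--(vi) of Lemma~\ref{algorithm}. For each, I apply the Dubrovnik skein relation at the leftmost crossing exactly as in the length-$3$ tree diagrams (Figures~\ref{3tree}, \ref{2tree}, \ref{other1}, and their analogues), and I invoke the sliding move of Figure~\ref{3treetrans} and the Reidemeister~II moves precisely where they were used before. Since none of these moves reaches past the $b_2$-section, each resulting leaf is a standard braid-form diagram whose first two sections have been transformed exactly as in Lemma~\ref{algorithm} and whose tail $b_3, \ldots, b_n$ is unchanged. Reading off the edge weights then yields the first six displayed identities with no new input.

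The one case that genuinely requires more than copying the length-$3$ argument is $b_1 = b_2 = 1$, which is why it splits into the last two identities. Here the switched-crossing state collapses the first two sections via Reidemeister~II, brings the $b_3$-section into contact with the remainder of the diagram, and straightens out its $b_3$ half-twists by Reidemeister~I moves, each contributing a factor of $a^{-1}$ and producing the coefficient $a^{-b_3}$. The A-state and B-state again merely carry the tail along, giving the terms $-z\,\P[b_3+1, b_4, \ldots, b_n]$ and $+z a\,\P[b_3, b_4, \ldots, b_n]$. The subtlety is entirely in the switched-crossing state: when $b_4 \geq 2$ the straightening leaves a residual leading section and the result is $a^{-b_3}\,\P[1, b_4 - 1, \ldots, b_n]$, whereas when $b_4 = 1$ the single half-twist of the fourth section is absorbed completely, so that the third and fifth sections become adjacent and merge into one section of $1 + b_5$ half-twists, giving $a^{-b_3}\,\P[1 + b_5, \ldots, b_n]$. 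This dichotomy is exactly the content of the extra length-$5$ computation for $D[1,1,b_3,1,b_5]$ recorded just before the theorem (Figure~\ref{case111}).

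The main obstacle is the diagrammatic bookkeeping in this last case: one must verify carefully how the closure and the Reidemeister~I moves interact with the tail after the first two sections are removed, and in particular confirm the merging of the $b_3$- and $b_5$-sections when $b_4 = 1$. Once this single geometric point is checked---which the pre-theorem analysis of $D[1,1,b_3,1,b_5]$ already does---the remaining verifications are routine applications of locality, and collecting the seven identities completes the proof.
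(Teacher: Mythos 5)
Your proposal is correct and is essentially the paper's own argument: the paper likewise proves the theorem by observing that all manipulations behind Lemma~\ref{algorithm} are local to the two leftmost sections of twists, so they carry over verbatim with the tail $b_3,\ldots,b_n$ unchanged, and by invoking the separately computed length-5 case $D[1,1,b_3,1,b_5]$ for the final relation. One slip in your geometric description of the $b_4=1$ case (harmless here, since you defer to the paper's pre-theorem computation for exactly this point): it is not the $b_3$- and $b_5$-sections that merge --- the $b_3$-section is completely unwound by the Reidemeister~I moves, which is precisely what produces the factor $a^{-b_3}$, and it is the single crossing of the fourth section that, after the sliding move of Figure~\ref{3treetrans}, joins the $b_5$-section to form the section of $1+b_5$ half-twists.
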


\begin{proof}
The statement follows from Lemma~\ref{algorithm} and the discussion following it.
\end{proof}

Note that the standard braid-form diagrams appearing in both sides of any of the relations in Theorem~\ref{algorithm-thm} have the same parity.
From the patterns of these relations, the second named author wrote a program in Mathematica\textsuperscript{\textregistered} (see Appendix) which computes the Dubrovnik polynomial of any rational knot from a standard braid-form diagram. 

It is worth noting that although we began the reduction algorithm at the leftmost section of twists for programming purposes, beginning the reduction at the right hand side of the braid-form diagram reduces the number of cases needed to be considered. This is because the Dubrovnik skein relation, when applied to the right hand side of the diagram, does not result in diagrams (states) which are not in braid form, and thus does not require the sliding move. 

\section{Coefficient polynomials and a reduction formula} \label{sec:reduction}

In this section we use the mechanics of the Dubrovnik skein relation described in Section~\ref{sec:Dub}, to create an expression for the Dubrovnik polynomial of a standard braid-form diagram relative to the number of half-twists in a particular section. 

Our approach is motivated by the consistent recurrence of what we will call the \textit{coefficient polynomials} of the Dubrovnik polynomial for the reduced diagrams. 

To begin with, we borrow a notation from~\cite{D} and consider a family of links $L_m$ (where $m$ is an integer) which are identical except within a certain ball, where they have the segment indicated as in Figure \ref{fln}. Thus we are considering links that are identical except for a chosen section of twists (see Figure \ref{ln} for an example).

\begin{figure}[ht]
\raisebox{-10pt}{\includegraphics[width=4.5in]{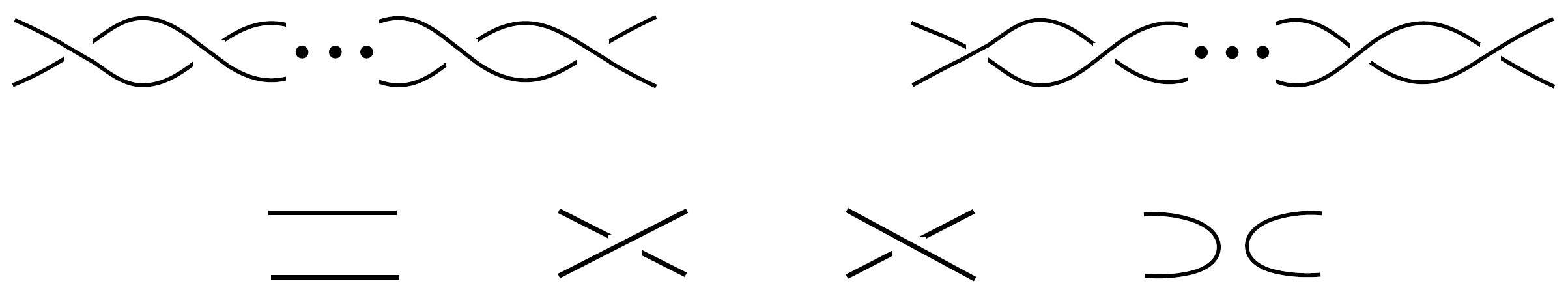}}
 \put(-285,20){\fontsize{10}{10} $L_m, \,\, m>0$}
  \put(-90, 20){\fontsize{10}{10}$L_m, \,\, m<0$}
    \put(-265, -26){\fontsize{10}{10} $ L_0$}
  \put(-205, -26){\fontsize{10}{10} $ L_{-}$}
  \put(-145, -26){\fontsize{10}{10} $L _{+}$}
  \put(-80, -26){\fontsize{10}{10} $ L_{\infty}$}
\caption{The family of links $L_m$ \label{fln}}
 \end{figure}

 For our purposes, $L_-$ (or $L_{+}$) is used in conjunction with $L_m$ for $m>0$ (or $m<0$). We will explicitly show the case of $D[b_1, \ldots, b_n]$ for all $b_i$'s positive (the negative case is treated similarly). The diagram $L_+$ or $L_ -$ appears in our reduction formula because of the possibility that we could have a section with one half-twist (unlike in \cite{D}, where all $b_i$'s are even). As discussed previously, applying the Dubrovnik skein relation to a section of one half-twist results in changes to the next section of twists, which $L_+$ and $L_-$ address.
 
 We begin by considering a section of $m >0$ half-twists in the rightmost block of twists in a standard diagram $D[b_1, \ldots, b_n]$, where $n$ is odd and all $b_i$'s are positive. 
 Figure \ref{ln} shows a family $L_m$ for $m = 3$.
\begin{figure}[ht]
\raisebox{-10pt}{\includegraphics[width=4.5in]{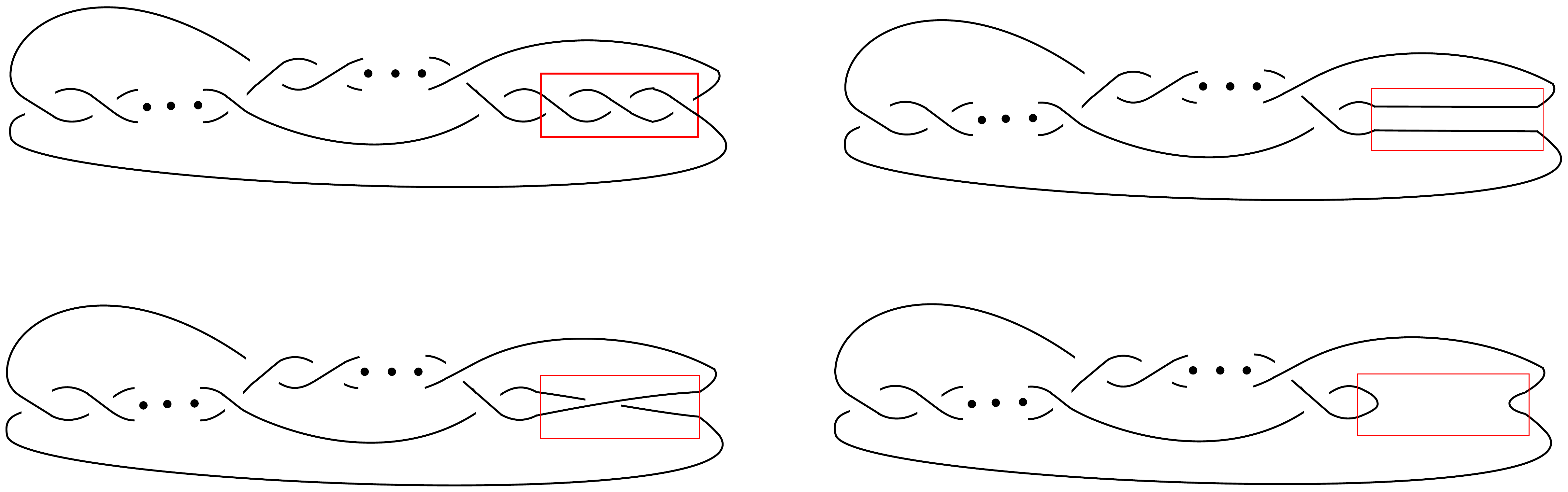}}
  \put(-90, 40){\fontsize{11}{10} $L_0$}
    \put(-255, -22){\fontsize{11}{10} $ L_-$}
  \put(-90, -22){\fontsize{11}{10} $ L_\infty$}
 \put(-255, 40){\fontsize{11}{10} $ L_m$}
  \put(-213, 57){\fontsize{8}{10} $m =3$}
\caption{An example for $L_m$, $m=3$ \label{ln}}
 \end{figure}
 
 \begin{remark}\label{relation} 
 
 In Section~\ref{sec:Dub} we discussed how applying the Dubrovnik skein relation affects a diagram. Observe that since the A-state reduces a diagram by one half-twist, it is equivalent to $L_{m-1}$, i.e., it has $m-1$ half-twists in the chosen section. In addition, the switched-crossing state reduces the diagram by two half-twists, and therefore, it is equivalent to $L_{m-2}$. The trickier part is the relationship of the B-state with $L_m$. The Dubrovnik skein relation applied to the rightmost crossing in a standard diagram $D[b_1, \ldots, b_n]$ results in a B-state whose polynomial evaluation is $a^{1 - b_n}\P[b_1, \ldots, b_{n-1}]$ for $n$ odd and $a^{b_n-1}\P[b_1, \ldots, b_{n-1}]$ for $n$ even. On the other hand, the polynomial evaluation, $P(L_{\infty})$, of the diagram $L_{\infty}$ is $a^{m-b_n}\P[b_1, \ldots, b_{n-1}]$ if $n$ is odd and $a^{b_n-m}\P[b_1, \ldots, b_{n-1}]$ if $n$ is even. Therefore the evaluation of the B-state is $a^{1-m}P(L_{\infty})$ for $n$ odd and $a^{m-1}P(L_{\infty})$ for $n$ even. Therefore, the Dubrovnik skein relation applied at the rightmost crossing in the diagram $D[b_1, \ldots, b_n]$ with $n$ odd can be rewritten as 
 \begin{eqnarray}\label{eq:first-red}
 P(L_m)= P(L_{m-2})-za^{1-m}P(L_{\infty})+zP(L_{m-1}).
 \end{eqnarray}
  \end{remark}
 
 \subsection{Recurrence relations for the coefficient polynomials}
  
 Let $A_m$ denote the coefficient of $P(L_-)$, $B_m$ the coefficient of $P(L_0)$, and $C_m$ the coefficient of $P(L_\infty)$. We will show that we have the following recurrence relations: 
\[A_m=zA_{m-1}+A_{m-2},  \hspace{15pt} B_m=zB_{m-1}+B_{m-2}, \hspace{7pt} \text{and}\hspace{7pt}  C_m=a^{-1}C_{m-1}+zB_{m-1},\] 
where $m >2$.
Before that, we prove three lemmas involving these coefficients.

\begin{lemma}\label{b}
 Let $B_m(z) =\displaystyle\sum_{i=0}^{\lfloor{\frac{m}{2}}\rfloor}z^{m-2i}\binom{m-i}{i}$, for $m \in \N$. Then 
 
\[B_m=zB_{m-1}+B_{m-2}, \,\,\text{for all} \,\,  m >2.\]
\end{lemma}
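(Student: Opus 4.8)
The plan is to prove the recurrence $B_m = zB_{m-1} + B_{m-2}$ directly from the closed-form definition $B_m(z) = \sum_{i=0}^{\lfloor m/2 \rfloor} z^{m-2i}\binom{m-i}{i}$ by manipulating the binomial sums. First I would compute $zB_{m-1}$ and $B_{m-2}$ separately as explicit sums in powers of $z$, then add them and show the result agrees term-by-term with $B_m$. Writing
\[
zB_{m-1} = \sum_{i=0}^{\lfloor (m-1)/2 \rfloor} z^{m-2i}\binom{m-1-i}{i}, \qquad
B_{m-2} = \sum_{j=0}^{\lfloor (m-2)/2 \rfloor} z^{m-2-2j}\binom{m-2-j}{j},
\]
I would reindex the second sum by setting $i = j+1$ so that its generic term becomes $z^{m-2i}\binom{m-1-i}{i-1}$, matching the power of $z$ appearing in the first sum. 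Adding the two, the coefficient of $z^{m-2i}$ is then $\binom{m-1-i}{i} + \binom{m-1-i}{i-1}$, which by Pascal's rule equals $\binom{m-i}{i}$ — exactly the coefficient of $z^{m-2i}$ in $B_m$.

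The main work is bookkeeping at the boundaries of the summation ranges, and this is where I expect the only real subtlety to lie. I would handle the cases $m$ even and $m$ odd separately, because the upper limit $\lfloor m/2 \rfloor$ behaves differently in each parity and the top term of $B_m$ (the constant term when $m$ is even, or the lowest-degree term when $m$ is odd) must be accounted for carefully. Concretely, when $m$ is even the top term of $B_m$ is $z^0\binom{m/2}{m/2} = 1$, which should arise entirely from the reindexed $B_{m-2}$ sum (its $j = (m-2)/2$ term), while the corresponding index $i = m/2$ lies just beyond the range of the $zB_{m-1}$ sum; one checks that $\binom{m-1-i}{i}$ with $i = m/2$ indeed vanishes or is absent, so no extra term is introduced. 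The analogous check at the low-index end ($i = 0$, giving the leading $z^m$ term with coefficient $\binom{m}{0}=1$) is immediate since only $zB_{m-1}$ contributes there. Verifying that these edge contributions match and that no indices stray outside the valid range is the one place where a careless off-by-one would break the argument.

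An alternative, cleaner route would be to recognize $B_m$ as a Fibonacci-type polynomial and invoke a generating-function identity: the sum $\sum_i z^{m-2i}\binom{m-i}{i}$ is the classical expression for the Fibonacci polynomials, whose defining recurrence is precisely $B_m = zB_{m-1} + B_{m-2}$. If I wanted to minimize computation I would instead verify the generating function $\sum_{m\ge 0} B_m(z)\,t^m = (1 - zt - t^2)^{-1}$, from which the recurrence follows by clearing the denominator and comparing coefficients of $t^m$; the denominator $1 - zt - t^2$ encodes the recurrence directly. Either way the identity is elementary, so I would present the direct Pascal's-rule computation as the primary proof, since it is self-contained and requires no appeal to known Fibonacci-polynomial facts, and I would simply take care to state the base cases $B_0 = 1$, $B_1 = z$, $B_2 = z^2 + 1$ explicitly so that the recurrence for $m > 2$ together with these values determines the whole sequence.
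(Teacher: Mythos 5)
Your proposal is correct and follows essentially the same route as the paper's own proof: reindex $B_{m-2}$ to align powers of $z$, apply Pascal's rule $\binom{m-1-i}{i}+\binom{m-1-i}{i-1}=\binom{m-i}{i}$, and check the boundary terms separately for $m$ even and $m$ odd. Your bookkeeping at the endpoints (the $z^m$ term coming only from $zB_{m-1}$, the constant term for even $m$ coming only from the reindexed $B_{m-2}$) matches the paper's case analysis, so there is nothing substantive to add.
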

\begin{proof} Let $B_m$ be as above, where $m>2$. Then, we have:
\begin{eqnarray*}
zB_{m-1}+B_{m-2}&=&z\sum_{i=0}^{\lfloor{\frac{m-1}{2}}\rfloor}z^{m-1-2i}\binom{m-1-i}{i}+\sum_{i=0}^{\lfloor{\frac{m-2}{2}}\rfloor}z^{m-2-2i}\binom{m-2-i}{i}\\
&=&\sum_{i=0}^{\lfloor{\frac{m-1}{2}}\rfloor}z^{m-2i}\binom{m-1-i}{i}+\sum_{i=1}^{\lfloor{\frac{m}{2}}\rfloor}z^{m-2i}\binom{m-1-i}{i-1}\\
&=&\sum_{i=1}^{\lfloor{\frac{m-1}{2}}\rfloor}z^{m-2i}\binom{m-1-i}{i}+\sum_{i=1}^{\lfloor{\frac{m}{2}}\rfloor}z^{m-2i}\binom{m-1-i}{i-1}+z^m.
\end{eqnarray*}
Case 1: $m$ odd. Then $\lfloor{\frac{m-1}{2}}\rfloor=\lfloor{\frac{m}{2}}\rfloor$.
Thus we have
\begin{eqnarray*}
&&\sum_{i=1}^{\lfloor{\frac{m}{2}}\rfloor}z^{m-2i}\binom{m-1-i}{i}+\sum_{i=1}^{\lfloor{\frac{m}{2}}\rfloor}z^{m-2i}\binom{m-1-i}{i-1}+z^m\\
&=&\sum_{i=1}^{\lfloor{\frac{m}{2}}\rfloor}z^{m-2i}\left[\binom{m-1-i}{i}+\binom{m-1-i}{i-1}\right]  +z^m\\
&=&\sum_{i=1}^{\lfloor{\frac{m}{2}}\rfloor}z^{m-2i}\binom{m-i}{i}+z^m\\
&=&\sum_{i=0}^{\lfloor{\frac{m}{2}}\rfloor}z^{m-2i}\binom{m-i}{i}=B_m.
\end{eqnarray*}
Case 2: $m$ even. Then $\lfloor{\frac{m-1}{2}}\rfloor=\lfloor{\frac{m}{2}}\rfloor-1$, and we have:
\begin{eqnarray*}
&&\sum_{i=1}^{\lfloor{\frac{m-1}{2}}\rfloor}z^{m-2i}\binom{m-1-i}{i}+\sum_{i=1}^{\lfloor{\frac{m}{2}}\rfloor}z^{m-2i}\binom{m-1-i}{i-1}+z^m\\
&=&\sum_{i=1}^{\lfloor{\frac{m}{2}}\rfloor-1}z^{m-2i}\left[\binom{m-1-i}{i}+\binom{m-1-i}{i-1}\right]+z^m\\&&+z^{m-2\lfloor{\frac{m}{2}}\rfloor}\binom{m-\lfloor{\frac{m}{2}}\rfloor-1}{\lfloor{\frac{m}{2}}\rfloor-1}\\
&=&\sum_{i=1}^{\lfloor{\frac{m}{2}}\rfloor-1}z^{m-2i}\binom{m-i}{i}+z^m+z^{m-2\lfloor{\frac{m}{2}}\rfloor}\binom{m-\lfloor{\frac{m}{2}}\rfloor-1}{\lfloor{\frac{m}{2}}\rfloor-1}\\
&=&\sum_{i=0}^{\lfloor{\frac{m}{2}}\rfloor-1}z^{m-2i}\binom{m-i}{i}+z^{m-2\lfloor{\frac{m}{2}}\rfloor}\binom{m-\lfloor{\frac{m}{2}}\rfloor-1}{\lfloor{\frac{m}{2}}\rfloor-1}\\
&=&\sum_{i=0}^{\lfloor{\frac{m}{2}}\rfloor}z^{m-2i}\binom{m-i}{i}=B_m.
\end{eqnarray*}
Therefore, the statement holds for both $m$ odd and even.
\end{proof}

\begin{lemma}\label{a}
 Let $A_m(z)= \displaystyle\sum_{i=0}^{\lfloor{\frac{m-1}{2}}\rfloor}z^{m-1-2i}\binom{m-1-i}{i}$, for $m \in \N$. Then  \[A_m=zA_{m-1}+A_{m-2}, \,\,\, \text{for}\, \,m >2. \]
\end{lemma}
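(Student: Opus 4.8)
The plan is to avoid repeating the Pascal-triangle computation used for Lemma~\ref{b} by observing that $A_m$ is just a shifted copy of $B_m$. Comparing the two closed forms, I would first replace $m$ by $m-1$ in the definition of $B_m$, obtaining
\[
B_{m-1}(z)=\sum_{i=0}^{\lfloor\frac{m-1}{2}\rfloor}z^{m-1-2i}\binom{m-1-i}{i},
\]
which is precisely the defining expression for $A_m(z)$. Hence $A_m=B_{m-1}$ for every $m\in\N$, and the entire lemma reduces to the recurrence already proved for the $B$'s.

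With this identification in hand, the desired relation $A_m=zA_{m-1}+A_{m-2}$ is the same as $B_{m-1}=zB_{m-2}+B_{m-3}$. By Lemma~\ref{b}, the identity $B_k=zB_{k-1}+B_{k-2}$ holds for all $k>2$; setting $k=m-1$ establishes the claim for all $m>3$ at once. The only value of $m$ in the asserted range $m>2$ not covered this way is $m=3$, so I would dispatch that base case by direct evaluation: one computes $A_1=1$, $A_2=z$, and $A_3=z^2+1$, whence $zA_2+A_1=z^2+1=A_3$, as needed.

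I expect essentially no obstacle in this argument; the one point demanding care is the off-by-one in the index ranges. Since Lemma~\ref{b} is stated for $m>2$ rather than $m\geq 2$, the substitution $k=m-1$ does not by itself yield the case $m=3$, which is why the separate base-case verification above is genuinely required to recover the full range $m>2$ claimed in the statement. (Should one prefer a self-contained proof that does not invoke the shift $A_m=B_{m-1}$, the same manipulation as in Lemma~\ref{b} — splitting off the top term, reindexing the second sum, and applying Pascal's rule $\binom{m-1-i}{i}+\binom{m-1-i}{i-1}=\binom{m-i}{i}$ — goes through verbatim with $m$ replaced by $m-1$ throughout, but recognizing $A_m=B_{m-1}$ makes this unnecessary.)
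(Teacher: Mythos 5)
Your proof is correct and takes essentially the same route as the paper, which likewise notes $A_1=1$, $A_m=B_{m-1}$ for $m>1$, and invokes Lemma~\ref{b} with $m-1$ in place of $m$. You are in fact slightly more careful than the paper: that substitution only covers $m>3$, and your explicit verification of the base case $m=3$ (which lies outside the stated range of Lemma~\ref{b}) closes an off-by-one gap that the paper's one-line proof glosses over.
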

\begin{proof} Note that  $A_1 = 1$ and $A_m = B_{m-1}$ for $m>1$, where $B_m$ is defined as in Lemma~\ref{b}. Therefore, the statement follows by substituting $m-1$ for $m$ in Lemma~\ref{b}.
\end{proof}

\begin{lemma}\label{c} Let $C_m(z, a)=\displaystyle \sum_{j=1}^m \displaystyle\sum_{i=0}^{\lfloor{\frac{j-1}{2}}\rfloor}z^{j-2i}a^{j-m}\binom{j-1-i}{i}$ and $B_m(z)=\displaystyle\sum_{i=0}^{\lfloor{\frac{m}{2}}\rfloor}z^{m-2i}\binom{m-i}{i}$, for $m \in \N$. Then the following hold:
\begin{eqnarray*}
C_m &=& a^{-1}C_{m-1}+zB_{m-1}, \, \,\,\text{for} \,\, m >1 \\ \quad C_m &=& C_{m-2}+za^{1-m}+zC_{m-1}, \, \,\,\text{for} \,\, m >2.
\end{eqnarray*}
\end{lemma}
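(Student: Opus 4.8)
The plan is to reduce both recurrences to a single transparent description of $C_m$ that exposes its relationship with the polynomials $A_j$ of Lemma~\ref{a}. The observation I would start from is that the inner sum in the definition of $C_m$ is, up to a factor of $z$, exactly $A_j$: since
\[ \sum_{i=0}^{\lfloor \frac{j-1}{2}\rfloor} z^{j-2i}\binom{j-1-i}{i} = z\sum_{i=0}^{\lfloor \frac{j-1}{2}\rfloor} z^{j-1-2i}\binom{j-1-i}{i} = zA_j, \]
I can rewrite
\[ C_m = \sum_{j=1}^m z a^{j-m} A_j = z a^{-m}\sum_{j=1}^m a^j A_j. \]
Everything then follows by manipulating this one sum, so I would record this identity first as the backbone of both parts.

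For the first recurrence I would simply peel off the top term. Writing the analogous expression for $C_{m-1}$ and multiplying by $a^{-1}$ gives $a^{-1}C_{m-1} = \sum_{j=1}^{m-1} z a^{j-m} A_j$, so that $C_m - a^{-1}C_{m-1} = z a^{0}A_m = zA_m$. Invoking the identity $A_m = B_{m-1}$ (valid for $m>1$) established in Lemma~\ref{a} then yields $C_m = a^{-1}C_{m-1} + zB_{m-1}$, as claimed.

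For the second recurrence I would compute $C_m - zC_{m-1} - C_{m-2}$ directly. Factoring out $z a^{-m}$, this equals $z a^{-m}$ times
\[ \sum_{j=1}^{m} a^j A_j - za\sum_{j=1}^{m-1} a^j A_j - a^2\sum_{j=1}^{m-2} a^j A_j. \]
After reindexing the last two sums (replacing $j$ by $j-1$ and by $j-2$ respectively) so that all three carry a common factor $a^j$, the coefficient of $a^j$ for each $j\geq 3$ becomes $A_j - zA_{j-1} - A_{j-2}$, which vanishes by the three-term recurrence of Lemma~\ref{a}. What remains are the boundary terms at $j=1$ and $j=2$, which I would evaluate explicitly using $A_1 = 1$ and $A_2 = z$; the $j=2$ term cancels while the $j=1$ term contributes a solitary $a$, so the bracket collapses to $a$ and $C_m - zC_{m-1} - C_{m-2} = z a^{-m}\cdot a = za^{1-m}$. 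Notably, this route needs no separate parity analysis, unlike the proof of Lemma~\ref{b}.

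The routine parts are the reduction to $C_m = z a^{-m}\sum_{j=1}^m a^j A_j$ and the telescoping in the first recurrence. I expect the main obstacle to be the bookkeeping in the second recurrence: aligning the three summation ranges after reindexing and correctly isolating the leftover terms, since the differing upper limits of the three sums and the floor functions in the original definitions make it easy to mishandle a boundary contribution. Keeping careful track of exactly which indices survive, and confirming that the $j=2$ term cancels while the $j=1$ term produces the lone $a$, is where the argument demands the most attention.
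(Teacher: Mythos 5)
Your proof is correct, and it takes a genuinely different route from the paper's, especially for the second recurrence. Your key move is the observation that the inner sum in $C_m$ is $zA_j$, giving the compact form $C_m = za^{-m}\sum_{j=1}^m a^jA_j$; from there the first identity is a one-line telescoping plus $A_m = B_{m-1}$, and the second follows by reindexing three sums and killing all terms with $j\geq 3$ via the three-term recurrence $A_j = zA_{j-1}+A_{j-2}$ of Lemma~\ref{a}, leaving only the boundary contributions $aA_1 + a^2A_2 - za^2A_1 = a$ (I verified these: the $j=2$ terms do cancel since $A_2 = z$, and the lone $a$ survives). The paper proves the first identity by essentially the same merging of sums, but for the second identity it proceeds quite differently: it normalizes by $a^{-m}$, reduces the claim to a polynomial identity in the double sums, and verifies that identity from scratch using Pascal's rule with a case split on the parity of $j$ --- in effect redoing the kind of combinatorial work already done in Lemma~\ref{b}. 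Your argument buys brevity and avoids any parity analysis by leveraging the recurrence for $A_j$ (where the parity work was already paid for), at the mild cost of depending on Lemma~\ref{a} rather than being self-contained at the level of binomial identities; since Lemmas~\ref{b} and~\ref{a} precede Lemma~\ref{c} in the paper, this dependence is unobjectionable.
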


\begin{proof}
Considering $C_m$ and $B_m$ as above, we have the following:
\begin{eqnarray*}
a^{-1}C_{m-1}+zB_{m-1}&=&a^{-1}\sum_{j=1}^{m-1} \sum_{i=0}^{\lfloor{\frac{j-1}{2}}\rfloor}z^{j-2i}a^{j-m+1}\binom{j-1-i}{i}\\&&+z\sum_{i=0}^{\lfloor{\frac{m-1}{2}}\rfloor}z^{m-1-2i}\binom{m-1-i}{i}\\
&=&\sum_{j=1}^m \sum_{i=0}^{\lfloor{\frac{j-1}{2}}\rfloor}z^{j-2i}a^{j-m}\binom{j-1-i}{i}=C_m.
\end{eqnarray*}
Therefore, the first equality holds. We verify now the second equality. We observe first that
\begin{eqnarray*}C_{m-2}+za^{1-m}+zC_{m-1}&=&\sum_{j=1}^{m-2} \sum_{i=0}^{\lfloor{\frac{j-1}{2}}\rfloor}z^{j-2i}a^{j-m+2}\binom{j-1-i}{i}+za^{1-m}\\&&+z\sum_{j=1}^{m-1} \sum_{i=0}^{\lfloor{\frac{j-1}{2}}\rfloor}z^{j-2i}a^{j-m+1}\binom{j-1-i}{i}.
\end{eqnarray*}
In addition, by dividing out by $a^{-m}$, we obtain 
\begin{eqnarray*}
\frac{C_m}{a^{-m}}=za+\sum_{j=2}^m \sum_{i=0}^{\lfloor{\frac{j-1}{2}}\rfloor}z^{j-2i}a^{j}\binom{j-1-i}{i}
\end{eqnarray*} 
and 
\begin{eqnarray*}
\frac{C_{m-2}+za^{1-m}+zC_{m-1}}{a^{-m}}&=&\sum_{j=1}^{m-2} \sum_{i=0}^{\lfloor{\frac{j-1}{2}}\rfloor}z^{j-2i}a^{j+2}\binom{j-1-i}{i}+za\\&&+z\sum_{j=1}^{m-1} \sum_{i=0}^{\lfloor{\frac{j-1}{2}}\rfloor}z^{j-2i}a^{j+1}\binom{j-1-i}{i}.
\end{eqnarray*}
Thus it suffices to show that
\begin{eqnarray} \label{eq:coeffC}
\sum_{j=2}^m \sum_{i=0}^{\lfloor{\frac{j-1}{2}}\rfloor}z^{j-2i}a^{j}\binom{j-1-i}{i}&=&\sum_{j=1}^{m-2} \sum_{i=0}^{\lfloor{\frac{j-1}{2}}\rfloor}z^{j-2i}a^{j+2}\binom{j-1-i}{i}\\
&&+z\sum_{j=1}^{m-1} \sum_{i=0}^{\lfloor{\frac{j-1}{2}}\rfloor}z^{j-2i}a^{j+1}\binom{j-1-i}{i}. \nonumber
\end{eqnarray}
Let's take a look at the right hand side of the above equality.
\begin{eqnarray*}
RHS&=&\sum_{j=1}^{m-2} a^{j+2}\sum_{i=0}^{\lfloor{\frac{j-1}{2}}\rfloor}z^{j-2i}\binom{j-1-i}{i}+z\sum_{j=1}^{m-1} a^{j+1}\sum_{i=0}^{\lfloor{\frac{j-1}{2}}\rfloor}z^{j-2i}\binom{j-1-i}{i}\\
&=&\sum_{j=3}^{m} a^{j}\sum_{i=0}^{\lfloor{\frac{j-3}{2}}\rfloor}z^{j-2-2i}\binom{j-3-i}{i}+z\sum_{j=2}^{m} a^{j}\sum_{i=0}^{\lfloor{\frac{j-2}{2}}\rfloor}z^{j-1-2i}\binom{j-2-i}{i}\\
&=&\sum_{j=3}^{m} a^{j}\sum_{i=0}^{\lfloor{\frac{j-3}{2}}\rfloor}z^{j-2-2i}\binom{j-3-i}{i}+\sum_{j=3}^{m} a^{j}\sum_{i=0}^{\lfloor{\frac{j-2}{2}}\rfloor}z^{j-2i}\binom{j-2-i}{i}+a^2z^2\\
&=&\sum_{j=3}^{m} a^{j}z^j\left[\sum_{i=0}^{\lfloor{\frac{j-3}{2}}\rfloor}z^{-2-2i}\binom{j-3-i}{i}+\sum_{i=0}^{\lfloor{\frac{j-2}{2}}\rfloor}z^{-2i}\binom{j-2-i}{i}\right]+a^2z^2.
\end{eqnarray*}
Let $S$ be the inside of the sum above. Then, we have
\begin{eqnarray*}
S&=&\sum_{i=0}^{\lfloor{\frac{j-1}{2}}\rfloor-1}z^{-2-2i}\binom{j-3-i}{i}+\sum_{i=0}^{\lfloor{\frac{j-2}{2}}\rfloor}z^{-2i}\binom{j-2-i}{i}\\
&=&\sum_{i=1}^{\lfloor{\frac{j-1}{2}}\rfloor}z^{-2i}\binom{j-2-i}{i-1}+\sum_{i=1}^{\lfloor{\frac{j-2}{2}}\rfloor}z^{-2i}\binom{j-2-i}{i}+1.
\end{eqnarray*}
Case 1: If $j$ is even then $\lfloor{\frac{j-1}{2}}\rfloor=\lfloor{\frac{j-2}{2}}\rfloor$. Therefore, we have 
\begin{eqnarray*}
S&=&\sum_{i=1}^{\lfloor{\frac{j-1}{2}}\rfloor}z^{-2i}\left[\binom{j-2-i}{i-1}+\binom{j-2-i}{i}\right]+1\\&=&\sum_{i=1}^{\lfloor{\frac{j-1}{2}}\rfloor}z^{-2i}\binom{j-1-i}{i}+1\\&=&\sum_{i=0}^{\lfloor{\frac{j-1}{2}}\rfloor}z^{-2i}\binom{j-1-i}{i}.
\end{eqnarray*}
Case 2: If $j$ is odd then $\lfloor{\frac{j-1}{2}}\rfloor=\lfloor{\frac{j}{2}}\rfloor$. Then, we have 
\begin{eqnarray*}
S&=&\sum_{i=1}^{\lfloor{\frac{j-1}{2}}\rfloor}z^{-2i}\binom{j-2-i}{i-1}+\sum_{i=1}^{\lfloor{\frac{j}{2}}\rfloor-1}z^{-2i}\binom{j-2-i}{i}+1.
\end{eqnarray*}
Breaking off the last term of the first part, we get
\begin{eqnarray*}
S&=&\sum_{i=1}^{\lfloor{\frac{j-1}{2}}\rfloor-1}z^{-2i}\binom{j-2-i}{i-1}+\sum_{i=1}^{\lfloor{\frac{j}{2}}\rfloor-1}z^{-2i}\binom{j-2-i}{i}+1\\&&+z^{-2\lfloor{\frac{j-1}{2}}\rfloor}\binom{j-2-\lfloor{\frac{j-1}{2}}\rfloor}{\lfloor{\frac{j-1}{2}}\rfloor-1}\\
&=&\sum_{i=1}^{\lfloor{\frac{j-1}{2}}\rfloor-1}z^{-2i}\left[\binom{j-2-i}{i-1}+\binom{j-2-i}{i}\right]+1\\
&&+z^{-2\lfloor{\frac{j-1}{2}}\rfloor}\binom{j-2-\lfloor{\frac{j-1}{2}}\rfloor}{\lfloor{\frac{j-1}{2}}\rfloor-1}\\
&=&\sum_{i=1}^{\lfloor{\frac{j-1}{2}}\rfloor-1}z^{-2i}\binom{j-1-i}{i}+1+z^{-2\lfloor{\frac{j-1}{2}}\rfloor}\binom{j-2-\lfloor{\frac{j-1}{2}}\rfloor}{\lfloor{\frac{j-1}{2}}\rfloor-1}.
\end{eqnarray*}
Since $j$ is odd, $j=2p+1$ for some $p\in \N$. Then $\lfloor{\frac{j-1}{2}}\rfloor=p$, so
\[\binom{j-2-\lfloor{\frac{j-1}{2}}\rfloor}{\lfloor{\frac{j-1}{2}}\rfloor-1}=\binom{j-p-2}{p-1}=\binom{j-p-1}{p}=1,\] for the choice of $j$ above.

Therefore, in either case, we get that the sum $S$ satisfies the following equality: \[S=\sum_{i=0}^{\lfloor{\frac{j-1}{2}}\rfloor}z^{-2i}\binom{j-1-i}{i}.\]

Hence, our full original equation becomes:
\begin{eqnarray*}
RHS&=&\sum_{j=3}^{m} a^{j}z^j\left[\sum_{i=0}^{\lfloor{\frac{j-3}{2}}\rfloor}z^{-2-2i}\binom{j-3-i}{i}+\sum_{i=0}^{\lfloor{\frac{j-2}{2}}\rfloor}z^{-2i}\binom{j-2-i}{i}\right]+a^2z^2\\
&=&\sum_{j=3}^{m} a^{j}z^j\sum_{i=0}^{\lfloor{\frac{j-1}{2}}\rfloor}z^{-2i}\binom{j-1-i}{i}+a^2z^2\\&=&\sum_{j=2}^{m} a^{j}z^j\sum_{i=0}^{\lfloor{\frac{j-1}{2}}\rfloor}z^{-2i}\binom{j-1-i}{i},
\end{eqnarray*} 
which shows that the identity~\eqref{eq:coeffC} holds. Consequently, the second equality in the statement holds.
 \end{proof}

\subsection{The reduction formulas}

Lemmas \ref{b}--\ref{c} and our previous analysis lead to the following result, which we call the \textit{first reduction formula} (compare with~\cite[Theorem 1.14]{P}; we will give more details on this comparison in Remark~\ref{Chebyshev-poly}).

\begin{theorem}\label{red}  
Let $m$ be a positive integer and consider a family of link diagrams $L_0,\, L_-, \, L_{\infty}$ and $L_m$ which are identical except within a ball where they differ as indicated in Figure \ref{fln}. Then
\begin{eqnarray}\label{red-formula}
P(L_m)&= A_mP(L_-) + B_mP(L_0)  - C_mP(L_{\infty}),
\end{eqnarray} where 
\begin{eqnarray*}
A_m&=& B_{m-1} \,\, \text{for}\,\, m>1\,\, \text{and}\, \,A_1 = 1\\
B_m&=&\displaystyle\sum_{i=0}^{\lfloor{\frac{m}{2}}\rfloor}z^{m-2i}\binom{m-i}{i}\\
C_m&=&\displaystyle\sum_{j=1}^m \displaystyle\sum_{i=0}^{\lfloor{\frac{j-1}{2}}\rfloor}z^{j-2i}a^{j-m}\binom{j-1-i}{i}.
\end{eqnarray*}
\end{theorem}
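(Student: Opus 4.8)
The plan is to prove the formula by induction on $m$, using the reduction identity~\eqref{eq:first-red} of Remark~\ref{relation} as the engine of the recursion and the three coefficient recurrences established in Lemmas~\ref{b}, \ref{a}, and~\ref{c} as the tool that makes the induction close. The key observation is structural: \eqref{eq:first-red}, namely $P(L_m)=P(L_{m-2})-za^{1-m}P(L_\infty)+zP(L_{m-1})$, expresses $P(L_m)$ in terms of $P(L_{m-1})$ and $P(L_{m-2})$ together with an $L_\infty$ correction, and the claimed coefficients $A_m,B_m,C_m$ satisfy second-order recurrences in \emph{exactly} the same pattern. So once the base cases are settled, the inductive step is purely formal.

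First I would establish the base cases $m=1$ and $m=2$, since both the skein recursion and the coefficient recurrences are second order. For $m=1$ I note that $L_{-1}=L_-$, so \eqref{eq:first-red} reads $P(L_1)=P(L_-)+zP(L_0)-zP(L_\infty)$; direct evaluation of the defining sums gives $A_1=1$, $B_1=z$, and $C_1=z$, which matches. For $m=2$ I would apply \eqref{eq:first-red} to obtain $P(L_2)=P(L_0)-za^{-1}P(L_\infty)+zP(L_1)$, substitute the $m=1$ identity, and collect terms; comparing against the direct evaluations $A_2=z$, $B_2=1+z^2$, and $C_2=za^{-1}+z^2$ confirms the base case.

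For the inductive step, fix $m\geq 3$ and assume the formula holds for $m-1$ and $m-2$. Substituting these two instances into \eqref{eq:first-red} and grouping the coefficients of $P(L_-)$, $P(L_0)$, and $P(L_\infty)$ yields
\begin{align*}
P(L_m) &= (A_{m-2}+zA_{m-1})\,P(L_-) + (B_{m-2}+zB_{m-1})\,P(L_0) \\
&\quad {}- (C_{m-2}+za^{1-m}+zC_{m-1})\,P(L_\infty).
\end{align*}
Now Lemma~\ref{a} gives $A_{m-2}+zA_{m-1}=A_m$, Lemma~\ref{b} gives $B_{m-2}+zB_{m-1}=B_m$, and the second identity of Lemma~\ref{c} gives $C_{m-2}+za^{1-m}+zC_{m-1}=C_m$, all valid for $m>2$. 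Hence the right-hand side equals $A_mP(L_-)+B_mP(L_0)-C_mP(L_\infty)$, completing the induction.

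I expect no serious obstacle, precisely because the combinatorial work has been front-loaded into Lemmas~\ref{b}--\ref{c}: the second identity of Lemma~\ref{c} is the one tailored to absorb the $za^{1-m}$ term produced by the B-state, and it is the linchpin that makes the $P(L_\infty)$-coefficients align. The only genuine care needed is boundary bookkeeping, namely correctly reading off $L_{-1}=L_-$ when $m=1$ and $L_{m-2}=L_0$ when $m=2$, and checking that the lemma recurrences are invoked only in their valid range $m>2$.
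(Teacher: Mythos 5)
Your proposal is correct and follows essentially the same route as the paper's own proof: induction on $m$ via the rewritten skein relation~\eqref{eq:first-red}, with base cases $m=1$ and $m=2$ checked directly, and the inductive step closed by the recurrences of Lemmas~\ref{b}, \ref{a}, and~\ref{c}. The only cosmetic difference is that you make the boundary identifications $L_{-1}=L_-$ and $L_{m-2}=L_0$ explicit, which the paper leaves implicit.
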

\begin{proof} We will proceed by induction on $m$. If $m=1$, then $A_1=1, B_1=C_1=z$ and the equation~\eqref{red-formula} is merely the Dubrovnik skein relation. Thus the statement holds trivially for $m=1$.

Let $m=2$. Then $A_2=z, B_2=z^2+1,$ and $C_2=za^{-1}+z^2$. By the Dubrovnik skein relation, $P(L_2)=P(L_0)-za^{-1}P(L_{\infty})+zP(L_1)$ and $P(L_1)=P(L_-)-zP(L_{\infty})+zP(L_0)$. Substituting $P(L_1)$ in the identity for $P(L_2)$, we obtain
\begin{eqnarray*}
P(L_2)&=&P(L_0)-za^{-1}P(L_{\infty})+zP(L_-)-z^2P(L_{\infty})+z^2P(L_0)\\
&=&zP(L_-) + (z^2+1)P(L_0) - (za^{-1}+z^2)P(L_{\infty}).
\end{eqnarray*}
Therefore, the equation~\eqref{red-formula} holds for $m=2$.

Suppose that~\eqref{red-formula} holds for all $3 \leq i\leq m-1$, $i\in \N$. In Remark \ref{relation}, we showed that the Dubrovnik skein relation applied at the rightmost crossing in a standard braid-form diagram of odd length can be rewritten as follows: \[P(L_m)= P(L_{m-2})-za^{1-m}P(L_{\infty})+zP(L_{m-1}).\]
Note that by the way we defined $L_m$, the diagrams $L_0, L_{-}$ and $L_{\infty}$ are the same regardless of the chosen $m$. Substituting our assumed formula in for $P(L_{m-1})$ and $P(L_{m-2})$, yields
\begin{eqnarray*}
P(L_m)&=&[A_{m-2}P(L_-) + B_{m-2}P(L_0) - C_{m-2}P(L_{\infty})]-za^{1-m}P(L_{\infty})\\
&&+z[A_{m-1} P(L_-) + B_{m-1}P(L_0) - C_{m-1}P(L_{\infty})]\\
&=&[A_{m-2}+zA_{m-1}]P(L_-) + [B_{m-2}+zB_{m-1}]P(L_0)\\
&&-[C_{m-2}+za^{1-m}+zC_{m-1}]P(L_{\infty}).
\end{eqnarray*}
Employing Lemmas \ref{b}--\ref{c}, we obtain 
\[P(L_m) = A_mP(L_-) + B_mP(L_0) - C_mP(L_{\infty}),\]
which completes the proof.
\end{proof}

\begin{remark}\label{Chebyshev-poly}
The coefficients $A_m(z)$ and $B_m(z)$ used in Theorem~\ref{red} are a version of the Chebyshev polynomials.\ (We thank Sergei Chmutov and Jozef Przytycki for pointing this to us.) The \textit{Chebyshev polynomials of the second kind}, $U_m(x)$, satisfy the initial conditions
\[ U_0(x) =1, \,\, U_1(x) = 2x \]
and the recursive relation
\[U_m (x) = 2x U_{m-1}(x) -U_{m-2}(x), \,\,\, \text{for} \,\, m>1.   \]
It is an easy exercise to verify that
\[ A_m(z) = i^{1-m} U_{m-1}\left (\displaystyle \frac{iz}{2} \right), \,\, \text{for all} \,\,  m \geq 1, \,\,\, \text{where} \,\, i^2 = -1. \]
We remark that Jozef Przytycki obtained (see~\cite{P}) a similar reduction formula  as the one we gave in Theorem~\ref{red} (we thank him for telling us about this). The formula in~\cite[Theorem 1.14]{P} presents the 2-variable Kauffman polynomial (not the Dubrovnik polynomial) of a link diagram $L_m$ with $m>0$ in terms of the polynomials of the associated link diagrams $L_0, L_{\infty}$ and $L_+$ (not $L_-$). The polynomials $v_1^{(m)}(z)$ in~\cite[Theorem 1.14]{P} are related to our coefficient polynomials $A_m(z)$ as follows:
\[A_m(z) = i^{1-m} v_1^{(m)}(iz) \,\, \text{for} \,\, m \geq 1.  \] 
\end{remark}

We consider now the case $m<0$, to obtain the \textit{second reduction formula}. Given a standard diagram $D[b_1, \dots, b_n]$ with $n$ even and $b_i >0$ for all $1 \leq i \leq n$,  we consider a block of $m$ half-twists in the rightmost section of twists in the diagram. Since $n$ is even and all $b_i$'s are positive, the rightmost section of twists in the diagram is in the upper row, which corresponds to the case $m<0$. We give the resulting statement below. 

\begin{theorem}\label{red2}
Let $m$ be a negative integer and consider a family of link diagrams $L_0,\,  L_+,\, L_{\infty}$ and  $L_m$ which are identical except within a ball where they differ as indicated in Figure \ref{fln}.  Then the following equality holds: 
\begin{eqnarray} \label{red2-formula}
P(L_m)&=& A_{m} P(L_+) + B_{m} P(L_0) - C_{m} P(L_{\infty}),
\end{eqnarray} where 
\begin{eqnarray*}
A_{m}&=&B_{m+1} \,\, \text{for} \,\, m< -1 \,\, \text{and} \,\, A_{-1} = 1\\
B_{m}&=&\displaystyle\sum_{i=0}^{\lfloor{\frac{|m|}{2}}\rfloor}(-z)^{|m|-2i}\binom{|m|-i}{i}\\
C_{m} &=&\displaystyle\sum_{j=1}^{|m|} \displaystyle\sum_{i=0}^{\lfloor{\frac{j-1}{2}}\rfloor}(-z)^{j-2i}a^{|m|-j}\binom{j-1-i}{i}.
\end{eqnarray*}
\end{theorem}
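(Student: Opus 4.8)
The plan is to deduce the $m<0$ formula from the already-proved $m>0$ formula (Theorem~\ref{red}) by a mirror-image argument, rather than by re-running the induction. The key observation is that a block of $m<0$ (negative) half-twists becomes a block of $|m|>0$ (positive) half-twists when we pass to the mirror image, so Theorem~\ref{red} applies verbatim to the reflected family, and Lemma~\ref{mirror-image} then transports the resulting identity back. Since Theorem~\ref{red} is a purely local statement about \emph{any} family of diagrams differing within a ball as in Figure~\ref{fln}, there is no obstruction to applying it to the reflected family.

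More precisely, writing $\overline{L}$ for the mirror image of $L$ (all crossings switched), I would first record how the four distinguished pictures of Figure~\ref{fln} transform. Reflection reverses every crossing, so $\overline{L_m}$ is the positive-twist diagram $L_{|m|}$ of the reflected family; the single-crossing diagrams swap, $\overline{L_+}=L_-$ and $\overline{L_-}=L_+$; and the two smoothings are preserved, $\overline{L_0}=L_0$ and $\overline{L_\infty}=L_\infty$ (each with the remainder of the diagram mirrored). Applying Theorem~\ref{red} to the reflected family at the positive index $|m|$ then gives
\[
P(\overline{L_m}) = A_{|m|}\,P(\overline{L_+}) + B_{|m|}\,P(\overline{L_0}) - C_{|m|}\,P(\overline{L_\infty}),
\]
where $\overline{L_+}$ plays the role of the single negative crossing $L_-$ demanded by that theorem. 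Invoking Lemma~\ref{mirror-image}, $P(\overline{K})(z,a)=P(K)(-z,a^{-1})$, on each term and then substituting $z\mapsto -z$, $a\mapsto a^{-1}$ throughout converts this into
\[
P(L_m)(z,a) = A_{|m|}(-z,a^{-1})\,P(L_+) + B_{|m|}(-z,a^{-1})\,P(L_0) - C_{|m|}(-z,a^{-1})\,P(L_\infty).
\]

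It then remains to check that the substituted coefficients equal the ones claimed in the statement, namely $B_m=B_{|m|}(-z)$, $C_m=C_{|m|}(-z,a^{-1})$, and $A_m=A_{|m|}(-z,a^{-1})$. This is a routine symbolic verification: $B_{|m|}(z)$ carries no $a$-dependence, so $z\mapsto -z$ produces exactly $\sum_i(-z)^{|m|-2i}\binom{|m|-i}{i}$; in $C_{|m|}$ the factor $a^{j-|m|}$ becomes $a^{|m|-j}$ under $a\mapsto a^{-1}$ while $z^{j-2i}$ becomes $(-z)^{j-2i}$, matching the stated $C_m$; and $A_{|m|}=B_{|m|-1}$ together with $|m+1|=|m|-1$ gives $A_m=B_{m+1}$ in the same way, the base case $A_{-1}=1$ mirroring $A_1=1$. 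I expect the only genuine obstacle to lie one step earlier, in the bookkeeping: correctly identifying the $L_+\leftrightarrow L_-$ swap and the invariance of $L_0,L_\infty$ under reflection, and keeping the two variable substitutions straight (the $z\mapsto -z$, $a\mapsto a^{-1}$ coming from Lemma~\ref{mirror-image} and the subsequent back-substitution) so that the signs $(-z)^{\cdots}$ and the exponents $a^{|m|-j}$ come out with the right orientation. As an alternative, one could mirror the derivation of Theorem~\ref{red} itself: establish the reflected skein recurrence expressing $P(L_m)$ via $P(L_{m+1})$, $P(L_{m+2})$ and $P(L_\infty)$ for $m<0$, verify the base cases $m=-1,-2$, and invoke the $z\mapsto -z$, $a\mapsto a^{-1}$ analogues of Lemmas~\ref{b}--\ref{c}; the mirror-image route above, however, avoids repeating the induction altogether.
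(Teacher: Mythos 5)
Your proposal is correct, and it takes a genuinely different route from the paper. The paper proves Theorem~\ref{red2} by repeating the inductive scheme used for Theorem~\ref{red}: it checks the base cases $m=-1,-2$ directly from the Dubrovnik skein relation, writes the skein recursion $P(L_m)=P(L_{m+2})+za^{-m-1}P(L_\infty)-zP(L_{m+1})$ for negative $m$, and then appeals to negative-index analogues of the coefficient recurrences of Lemmas~\ref{b}--\ref{c}, which it does not prove in detail but asserts follow by ``similar proofs.'' You instead deduce the $m<0$ formula from the already-established $m>0$ formula by a mirror-image argument: since Theorem~\ref{red} is stated for an \emph{arbitrary} family of diagrams differing inside a ball as in Figure~\ref{fln}, it applies to the reflected family, in which $\overline{L_m}$ (for $m<0$) is the positive-twist diagram with $|m|$ crossings, $\overline{L_+}$ plays the role of $L_-$, and the smoothings $\overline{L_0},\overline{L_\infty}$ retain their roles; Lemma~\ref{mirror-image} together with the substitution $z\mapsto -z$, $a\mapsto a^{-1}$ then transports the identity back. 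Your verification that the substituted coefficients match the stated ones --- $B_{|m|}(-z)$, $C_{|m|}(-z,a^{-1})$, and $A_{|m|}(-z)=B_{|m|-1}(-z)$ against $B_m$, $C_m$, $A_m=B_{m+1}$, using $|m+1|=|m|-1$ for $m<-1$ --- is exactly the step that must be checked, and you carry it out correctly. Your route is more economical and arguably tighter: it needs no second induction and no unproved negative-index versions of Lemmas~\ref{b}--\ref{c}, relying only on results the paper has already fully established. What the paper's direct induction buys is independence from the mirror symmetry of the invariant and an explicit record of the $m<0$ skein recursion; logically, however, your argument is complete.
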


\begin{proof}
The proof is similar to the proof for Theorem~\ref{red}; we use induction on  $|m|$. If $m=-1$ then $A_{-1} = 1,\, B_{-1} = C_{-1} = -z$ and if $m = -2$ then $A_{-2} = -z,\, B_{-2} = z^2 +1,  C_{-2} = -za + z^2$.

By the Dubrovnik skein relation, $P(L_{-1}) = P(L_+) -z P(L_0) + z P(L_{\infty})$, and thus the formula~\eqref{red2-formula} holds for $m = -1$. Moreover, $P(L_{-2}) = P(L_0) + za P(L_{\infty}) -z P(L_{-1})$ and equivalently, 
\[P(L_{-2}) = - zP(L_+) + (z^2+1)P(L_0) -(- za +z^2)P(L_{\infty}).\]
Thus the statement holds for $m = -2$. Now we suppose that the statement holds for all negative integers larger than $m$, and we prove it holds for $m$. By the Dubrovnik skein relation, we have
\[P(L_m) = P(L_{m+2}) + za^{-m-1} P(L_{\infty}) - z P(L_{m+1}).   \]
A word of clarification is needed here, since $m$ is negative. In the above notation, the diagrams $L_{m+1}$ and $L_{m+2}$ contain 1 and, respectively, 2 less half-twists is the considered section of twists. 
By the induction hypothesis, 
\begin{eqnarray*}
P(L_{m+1}) &=& A_{m+1} P(L_+) + B_{m+1} P(L_0) - C_{m+1} P(L_{\infty}) \\
P(L_{m+2}) &=&A_{m+2} P(L_+) + B_{m+2} P(L_0) - C_{m+2} P(L_{\infty}).
\end{eqnarray*}
Substituting these into the above expression for $P(L_m)$, we obtain:
\begin{eqnarray*}
 P(L_m) &=& (A_{m+2} -z A_{m+1}) P(L_+) +(B_{m+2} -z B_{m+1})P(L_0) \\
 && - (C_{m+2} -za^{-m -1 } -z C_{m+1}) P(L_{\infty}).
 \end{eqnarray*}
 
 Similar proofs as in Lemmas~\ref{b} -- ~\ref{c} can be given to show that for $m<-2$, we have
 \begin{eqnarray*}
 A_m &=& -z A_{m+1} + A_{m+2}\\
 B_m &=& -z B_{m+1} + B_{m+2}\\
 C_m &=& -z C_{m+1} -z a^{-m +1} + C_{m+2},
 \end{eqnarray*}
 and therefore, the desired formula~\eqref{red2-formula} holds for $m$.
\end{proof}

\begin{remark}\label{sequence-formula}
 Consider a standard braid-form diagram $D[b_1, \ldots, b_n]$ with all $b_i$'s positive (and with $n$ odd or even). Applying Theorems \ref{red} and \ref{red2} recursively for $m=b_n$, we can write the Dubrovnik polynomial $\P[b_1, \ldots, b_n]$ in terms of polynomials associated to standard braid-form diagrams with fewer sections of twists, as follows:
\begin{eqnarray*}
\P[b_1, \ldots, b_n]&=&\sum_{i=0}^{\lfloor{\frac{b_n}{2}}\rfloor}(\epsilon_nz)^{b_n-2i}\binom{b_n-i}{i}\P[b_1, \ldots, b_{n-1}, 0]\\
&& + \sum_{i=0}^{\lfloor{\frac{b_n-1}{2}}\rfloor}(\epsilon_nz)^{b_n-1-2i}\binom{b_n-1-i}{i}\P[b_1, \ldots, b_{n-1},-1]\\
&&-\sum_{j=1}^{b_n} \sum_{i=0}^{\lfloor{\frac{j-1}{2}}\rfloor}(\epsilon_nz)^{j-2i}a^{\epsilon_n(j-b_n)}\binom{j-1-i}{i}\P[b_1, \ldots, b_{n-1}, \infty],
\end{eqnarray*}
where  $\epsilon_n=(-1)^{n-1}$. Using that 
$\P[b_1, \ldots, b_{n-1}, 0]=a^{\epsilon_nb_{n-1}} P[b_1, \ldots, b_{n-2}]$ and that the following diagrams are equivalent as links:
\[
D[b_1, \ldots, b_{n-1},-1] = D [b_1, \ldots, b_{n-1}-1], \,\,\,
D[b_1, \ldots, b_{n-1}, \infty] = D[b_1, \ldots, b_{n-1}], 
\]
we obtain that
\begin{eqnarray}\label{poly-formula}
 \P[b_1, \ldots, b_n]&=&a^{\epsilon_nb_{n-1}}\sum_{i=0}^{\lfloor{\frac{b_n}{2}}\rfloor}(\epsilon_nz)^{b_n-2i}\binom{b_n-i}{i}\P[b_1, \ldots, b_{n-2}] \nonumber \\
&& + \sum_{i=0}^{\lfloor{\frac{b_n-1}{2}}\rfloor}(\epsilon_nz)^{b_n-1-2i}\binom{b_n-1-i}{i}\P[b_1, \ldots, b_{n-1}-1]\\
&&-\sum_{j=1}^{b_n} \sum_{i=0}^{\lfloor{\frac{j-1}{2}}\rfloor}(\epsilon_nz)^{j-2i}a^{\epsilon_n(j-b_n)}\binom{j-1-i}{i}\P[b_1, \ldots, b_{n-1}], \nonumber
\end{eqnarray} 
for all $n >2$.
Note that the sign $\epsilon_n=(-1)^{n-1}$ is necessary for switching in our computations between $m<0$ and $m>0$ (see Figure \ref{fln}), as the calculations switch between reducing lower and upper rows of a standard braid-form diagram. Observe that reducing a lower row (or upper row) corresponds to an odd-length (or even-length) standard braid-form diagram. 
\end{remark}

For a given standard diagram $D[b_1, \ldots, b_{n-1}, b_n]$ with all $b_i$'s positive and $n>2$ (where $n$ is odd or even), let 
\[x_{n, b_n} : = \P[b_1, \ldots,b_{n-1}, b_n]\,\, \text{and} \,\, x_{n, b_n-1}: = \P[b_1, \ldots, b_{n-1}, b_n-1].\]
 More generally, let 
 \[x_{n, k}: = \P[b_1, \ldots,b_{n-1}, k],\,\, \text{where}\,\,  k \leq b_n\] and 
 \[x_{n-1, b_n-1} : = \P[b_1, \ldots,b_{n-1}]\,\, \text{and} \,\, x_{n-2, b_n-2}: = \P[b_1, \ldots,b_{n-2}].\] 
Thus the first subscript $n$ in $x_{n, k}$ stands for the length of a standard diagram and the second subscript $k$ corresponds to the number of half-twists in the rightmost section of twists in the given diagram. It is important to note that the integers $b_1, \ldots, b_{n-1}$ are fixed in the notation for $x_{n, k}$. 

With this new notation, the formula in~\eqref{poly-formula} becomes:
\begin{eqnarray}\label{eq:sequence-formula}
x_{n,b_n}&= &a^{(-1)^{n-1}b_{n-1}}\sum_{i=0}^{\lfloor{\frac{b_n}{2}}\rfloor}((-1)^{n-1}z)^{b_n-2i}\binom{b_n-i}{i}x_{n-2,b_{n-2}} \nonumber \\
&&+ \sum_{i=0}^{\lfloor{\frac{b_n-1}{2}}\rfloor}((-1)^{n-1}z)^{b_n-1-2i}\binom{b_n-1-i}{i}x_{n-1,b_{n-1}-1}\\&&-\sum_{j=1}^{b_n} \sum_{i=0}^{\lfloor{\frac{j-1}{2}}\rfloor}((-1)^{n-1}z)^{j-2i}a^{(-1)^{n-1}(j-b_n)}\binom{j-1-i}{i}x_{n-1, b_{n-1}}, \nonumber
\end{eqnarray}
This formula gives the Dubrovnik polynomial of a standard braid-form diagram of length $n$ in terms of the polynomials of standard diagrams of lengths $n-1$ and $n-2$, greatly reducing the amount of the necessary iterations, when compared to the defining skein relation alone. 

For a standard diagram $D[b_1, \ldots, b_{n-1}, b_n]$ with $b_i$'s positive and $n>2$, we let
\begin{eqnarray}
r_{n,b_n}: &=&a^{(-1)^{n-1}b_{n-1}}\sum_{i=0}^{\lfloor{\frac{b_n}{2}}\rfloor}((-1)^{n-1}z)^{b_n-2i}\binom{b_n-i}{i} \label{eq:r}\\
p_{n,b_n}: &=&\sum_{i=0}^{\lfloor{\frac{b_n-1}{2}}\rfloor}((-1)^{n-1}z)^{b_n-1-2i}\binom{b_n-1-i}{i} \label{eq:p}\\
l_{n,b_n}: &=&-\sum_{j=1}^{b_n} \sum_{i=0}^{\lfloor{\frac{j-1}{2}}\rfloor}((-1)^{n-1}z)^{j-2i}a^{(-1)^{n-1}(j-b_n)}\binom{j-1-i}{i}. \label{eq:l}
\end{eqnarray}
The identity~\eqref{eq:sequence-formula} becomes
\begin{eqnarray*}
x_{n,b_n} = l_{n,b_n}x_{n-1,b_{n-1}} + r_{n,b_n}x_{n-2,b_{n-2}} + p_{n,b_n}x_{n-1,b_{n-1}-1}
\end{eqnarray*}
or equivalently, 
\begin{eqnarray} \label{eq:sequences}
&& \P[b_1, \ldots, b_n] \\
 && =  l_{n,b_n} \P[b_1, \ldots, b_{n-1}] + r_{n,b_n} \P[b_1, \ldots, b_{n-2}] + p_{n,b_n}\P[b_1, \ldots, b_{n-1}-1].     \nonumber
\end{eqnarray}

Note that the terms in equation \eqref{eq:sequences} depend on the fixed integers $b_{n-1}$ and $b_{n-2}$, which are determined by the form of the original standard diagram $D[b_1, \dots, b_{n-1}, b_n]$.

Applying the Dubrovnik skein relation for diagrams $D[b_1]$ and $D[b_1, b_2]$ with both $b_1$ and $b_2$ positive, we see that the identities~\eqref{eq:sequence-formula} and~\eqref{eq:sequences} hold for $n=1$ and $n=2$ if we set 
\[x_{0,b_0}=1,\,\,\,\,  x_{-1,b_{-1}} = z^{-1}a + 1 - z^{-1} a^{-1} \,\,  \text{and}\,\,  x_{0,b_0-1} = a^{-1}.\]
We define $b_0=0$ and $b_{-1}=0$, and therefore
\[x_{0,b_0}=x_{0,0}=1\]
\[ x_{-1,b_{-1}} =x_{-1,0}= z^{-1}a + 1 - z^{-1} a^{-1}\]  \[x_{0,b_0-1} =x_{0,-1}= a^{-1}.\]
Note that $x_{1, 0} = P\left(\raisebox{-6pt}{\includegraphics[scale=.2]{unknot}}\right) = 1$. 

Therefore, with the above conventions, the equalities~\eqref{eq:sequence-formula} and~\eqref{eq:sequences} hold for all $n \in \N$.

\section{A closed-form formula for the Dubrovnik polynomial}\label{closed-form}

Now that we have a formula for the Dubrovnik polynomial of a standard braid-form diagram in terms of polynomials of standard diagrams of shorter lengths, it is one more step to obtain a closed-form expression for the Dubrovnik polynomial of the original diagram. Specifically, we seek a closed-form expression for $x_{n,b_n}$ in terms of $a, z$ and the entries of the $n$-tuple $(b_1, \ldots, b_{n-1}, b_n)$ associated with a standard diagram $D[b_1, \ldots, b_{n-1}, b_n]$. For this, it is convenient to consider first a more general situation represented by the following lemma.

\begin{lemma}\label{sequence} 
 Given a standard braid-form diagram $D[b_1, \ldots, b_{n-1}, b_n]$ of length $n$ and with $b_i >0$ for all $1\leq i\leq n$, let $l_{i, j}, \, r_{i, j}$ and $p_{i, j}$ be elements of a certain commutative ring $R$ (here we are interested in $R = \Z[a^{\pm 1}, z^{\pm 1}]$) and define recurrently the sequence $x_{n,b_n}, n\geq -1$, of elements in $R$ by the relation
\begin{eqnarray}\label{recurrence}
\hspace{1cm} x_{n,b_n}=l_{n,b_n}x_{n-1,b_{n-1}}+r_{n,b_n}x_{n-2,b_{n-2}}+p_{n,b_n}x_{n-1,b_{n-1}-1}, \,\,\,\text{for} \,\, n \geq 1
\end{eqnarray}
where $x_{-1,0}, \, x_{0,0},$ and $x_{0,-1}$ are fixed elements in $R$, and $b_0=b_{-1}=0$. 
Let $F$ be the set of all strictly decreasing integer sequences $f=\{f_1, f_2, \ldots, f_l\}$, where $f_1=n,\,\, f_i-f_{i+1}=1$ or $2$,\,\, $f_l=0$ or $-1$, and only one of $0$ or $-1$ is present in a sequence $f$ (that is, if $f_l=-1$ then $f_{l-1}\neq 0$). Each $f_i=c_i$ or $d_i$, and if $f_i = d_i$ then $f_i -f_{i+1} = 1$. 
For each $f_i \in f$, let $t_{f_i} = \begin{cases} 1 \,\,\, \text{if} \,\,\, f_{i-1} = d_{i-1}\\
 0 \,\,\, \text{if} \,\,\, f_{i-1} = c_{i-1}.
  \end{cases}$
  
Then $x_{n,b_n}$ can be written in terms of the initial conditions $x_{-1,0},\,  x_{0,0}, $ and $x_{0,-1}$ and the elements $l_{i,j}, \, r_{i,j}$ and $p_{i,j}$  as follows:
\begin{eqnarray}\label{tree-recurrence}
\hspace{1cm}x_{n,b_n}=\sum_{f\in F}x_{f_l,b_{f_l}-t_{f_l}}\prod_{i\in \lambda(f)} l_{f_i,b_{f_i}-t_{f_i}} \prod_{i\in\rho(f)} r_{f_i, b_{f_i}-t_{f_i}} \prod_{i\in \gamma(f)} p_{f_i, b_{f_i}-t_{f_i}}
\end{eqnarray}
where 
\begin{eqnarray*}
\lambda(f) &=&\{i| f_i = c_i\,\, \text{and} \,\, c_i-f_{i+1}=1\}\\
\rho(f) &=& \{i|f_i = c_i\,\, \text{and} \,\,c_i-f_{i+1}=2\}\\
\gamma(f) &=&  \{i|f_i = d_i \,\, \text{and} \,\,d_i-f_{i+1}=1\}.
\end{eqnarray*}
 \end{lemma}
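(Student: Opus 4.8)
The plan is to prove the closed-form expression \eqref{tree-recurrence} by induction on $n$, building a bijection between the terms produced by unfolding the three-term recurrence \eqref{recurrence} and the combinatorial objects in the set $F$. The underlying idea is that \eqref{recurrence} is a linear recurrence in which each application replaces $x_{n,b_n}$ by a linear combination of three ``lower'' terms: the $l$-term and the $p$-term drop the length index by $1$, while the $r$-term drops it by $2$. Iterating this until the length index reaches $0$ or $-1$ therefore generates a rooted ternary computational tree, and each root-to-leaf path is exactly a decreasing sequence $f = \{f_1,\dots,f_l\}$ with $f_1 = n$, consecutive gaps $f_i - f_{i+1} \in \{1,2\}$, and terminal value $f_l \in \{0,-1\}$. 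First I would set up this dictionary carefully: a gap of $2$ records an $r$-factor (hence $i \in \rho(f)$), a gap of $1$ records either an $l$-factor or a $p$-factor, and the label $c_i$ versus $d_i$ is precisely the bookkeeping that distinguishes these two possibilities. The auxiliary quantity $t_{f_i}$ encodes whether the \emph{previous} step along the path was a $p$-move (a $d$-step), since a $p$-move is the only one that shifts the second subscript from $b_{f_{i-1}}$ down to $b_{f_{i-1}} - 1 = b_{f_i} - 1$; this is why the subscript $b_{f_i} - t_{f_i}$ appears uniformly on every factor and on the leaf term.

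Concretely, the induction would run as follows. For the base cases I would check $n = -1, 0$ directly, where $F$ consists of the single trivial sequence and \eqref{tree-recurrence} reduces to $x_{n,b_n} = x_{n,b_n}$; a short check at $n = 1$ confirms the three initial sequences $\{1,0\}$, $\{1,-1\}$, and $\{1,0\}$-via-$p$ are generated correctly with the right $t$-labels. For the inductive step, I would expand $x_{n,b_n}$ once using \eqref{recurrence}, obtaining three summands governed by $l_{n,b_n}$, $r_{n,b_n}$, and $p_{n,b_n}$, and apply the induction hypothesis to each of $x_{n-1,b_{n-1}}$, $x_{n-2,b_{n-2}}$, and $x_{n-1,b_{n-1}-1}$. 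The key is that prepending $n$ to each sequence arising from the three sub-expansions produces exactly the sequences in the length-$n$ family $F$, partitioned according to whether the first gap is $2$ (the $r$-branch, contributing $i=1$ to $\rho(f)$ with $f_1 = c_1$), or $1$ with $f_1 = c_1$ (the $l$-branch), or $1$ with $f_1 = d_1$ (the $p$-branch). I would verify that the newly prepended factor matches $l_{f_1, b_{f_1} - t_{f_1}}$, etc., using that $t_{f_1} = 0$ always (there is no predecessor of the root), so the top factor is correctly $l_{n,b_n}$, $r_{n,b_n}$, or $p_{n,b_n}$ with unshifted second index.

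The step I expect to be the main obstacle is tracking the second subscript shifts so that the product in \eqref{tree-recurrence} lines up with the naive unfolding. In the naive expansion, a $p$-move applied at index $n$ produces $x_{n-1, b_{n-1} - 1}$, whose subsequent expansion uses the recurrence with $b_{n-1}$ replaced by $b_{n-1} - 1$ throughout; this shift then propagates to the factor attached at the node immediately below the $p$-move, and nowhere else. The formalism of $t_{f_i}$ is designed to absorb exactly this single-step propagation, so the delicate point is to show, when applying the induction hypothesis to the $p$-branch, that the induction-hypothesis subscripts (computed for the sequence starting at $n-1$ with its \emph{own} $t$-labels) agree with the ambient-sequence subscripts $b_{f_i} - t_{f_i}$ after prepending the $d$-labeled root. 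I would isolate this as a lemma-within-the-proof: for any $f \in F$ with first move a $p$-step, the $t$-value of the second node $f_2$ equals $1$ precisely because $f_1 = d_1$, which matches the $-1$ shift introduced by the $p$-move, and for all deeper nodes the ambient and intrinsic subscripts coincide because at most one $-1$ shift is ever ``in flight'' at a time. Once this subscript reconciliation is pinned down, the remaining bijection between paths and sequences and the matching of factor-by-factor products is a routine (if notation-heavy) verification, and the three partition classes $\lambda(f), \rho(f), \gamma(f)$ assemble into the stated product.
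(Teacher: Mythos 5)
Your proposal is correct and matches the paper's proof in substance: the paper likewise establishes the formula by unfolding the recurrence into a layered ternary computational tree, identifying root-to-leaf paths with the sequences in $F$ (gap-$2$ steps giving $r$-factors, gap-$1$ steps giving $l$- or $p$-factors according to the $c_i$/$d_i$ label, and $t_{f_i}$ recording the subscript shift caused by a preceding $p$-move). The paper presents this as a direct description of the computational tree rather than as a formal induction on $n$, so your inductive scaffolding and explicit subscript-reconciliation step simply make rigorous what the paper asserts descriptively.
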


\begin{proof} The formula describes the tree of calculations resulting from successive applications of the recurrence formula~\eqref{recurrence}.
We represent the computational tree using a layered rooted tree (as in~\cite{D}), where each layer $i$ collects the coefficients $l_{i, j}, r_{i, j}$, and $p_{i, j}$. Moreover, each layer $i$ corresponds to all terms $x_{i, j}$ whose first subscript is $i$. The $l$-edges and $r$-edges are drawn on the left and middle, respectively, at a vertex in the tree. According to the formula for $x_{n,b_n}$, the $l$-edges are of length 1 and $r$-edges of length 2; that is, the $l$-edges connect vertices in the tree that are at one level apart, and the $r$-edges connect vertices that are at two levels apart. Similarly, the $p$-edges are of length 1. We draw the $p$-edges to the right, and color them red. Figure \ref{3poly} shows a computational tree with $n=3$.
\begin{figure}[ht]
\raisebox{-10pt}{\includegraphics[scale=.6]{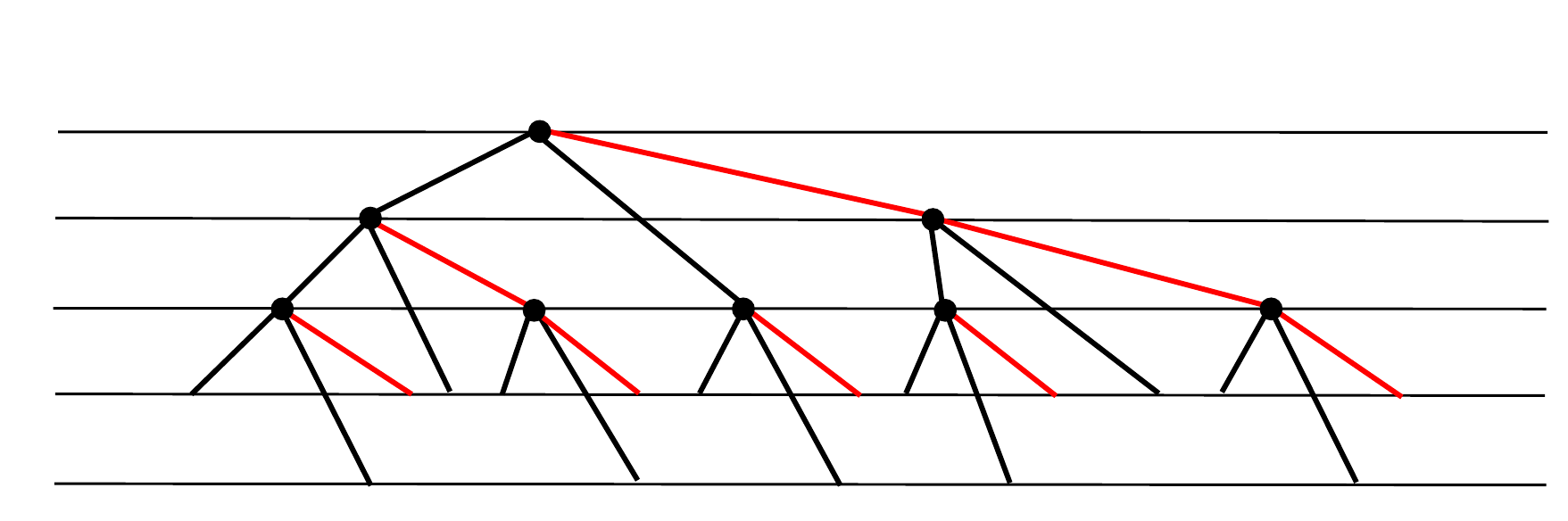}}
 \put(-320,64){\fontsize{9}{10}$ 3$}
  \put(-320, 47){\fontsize{9}{10}$ 2$}
    \put(-320, 30){\fontsize{9}{10}$ 1$}
  \put(-320, 13){\fontsize{9}{10}$ 0$}
    \put(-327, -5){\fontsize{9}{10}$ -1$}
\caption{Computational tree for $n=3$}\label{3poly}
 \end{figure}
The set of integer sequences $F$ generates the possible paths from the root to a leaf of the tree. Each sequence $f$ corresponds to a unique branch in the tree, with $f_i$ representing a vertex of the path.  An entry $c_i$ in a sequence $f$ corresponds to traveling along the left or middle edge incident to a vertex $f_i$ located at the $i$-th level in the tree. On the other hand, an entry $d_i$ in $f$ corresponds to traveling along the right edge at that vertex.  Any path in the tree produces a product of $l$-, $r$-, and $p$-coefficients, and the expression for $x_{n,b_n}$ is the sum over all paths in the tree, where each path starts from the highest level and ends at a leaf of the tree.  
\end{proof}

To obtain our desired closed-form formula for the Dubrovnik polynomial of a standard braid-form diagram $D[b_1, \ldots, b_n]$ with all $b_i >0$ positive, we combine Lemma \ref{sequence} with the reduction formulas provided in Theorem~\ref{red} and Theorem~\ref{red2} (and the associated formula~\eqref{eq:sequences} in Remark~\ref{sequence-formula}). Specifically, we associate the vertices in the layered tree of Lemma \ref{sequence} with the (Dubrovnik polynomial of) rational knot diagrams in standard braid-form obtained by successive applications of the reduction formulas in Theorem \ref{red} and Theorem~\ref{red2}. The root of the tree corresponds to the given rational knot diagram. Each vertex $v$ in the tree is associated with a diagram $L_m$ as in Theorem \ref{red} (if $m>0$) or Theorem~\ref{red2} (if $m<0$), and  the three descendants of $v$ correspond to the diagrams $L_{\infty}, L_0$ and $L_-$ (or $L_+$) associated with $L_m$. Figure \ref{3poly} depicts the computational tree for a standard braid-form diagram of length 3.

Notice that the $l$-, $r$-, and $p$-coefficients correspond to the coefficient polynomials $C_m(z, a), B_m(z)$ and $A_m(z)$, respectively, introduced in Section~\ref{sec:reduction}. The $p$-edges in a computational tree are highlighted in red to differentiate them from the other edges in the tree, as they reduce a different standard diagram than the other edges, namely $D[b_1,\ldots, b_i-1]$, decreasing not only the number of sections of twists in the diagram but also the number of half-twists in the rightmost section of twists.  

\begin{theorem}\label{closedform}
Let $D[b_1, \ldots, b_n]$ be a standard braid-form diagram with $b_i > 0$ for all $1 \leq i \leq n$. Then
\begin{eqnarray*}
&&\P[b_1, \ldots, b_n] =\\
&&\sum_{f\in F}x_{f_l,b_{f_l}-t_{f_l}} \prod_{i\in \lambda(f)}-\sum_{j=1}^{b_{f_i}-t_{f_i}} \sum_{i=0}^{\lfloor{\frac{j-1}{2}}\rfloor}((-1)^{f_i-1}z)^{j-2i}a^{(-1)^{f_i-1}(j-b_{f_i}+t_{f_i})}\binom{j-1-i}{i}\\
&& \cdot \prod_{i\in\rho(f)}  a^{(-1)^{f_i-1}b_{f_i-1}}\sum_{i=0}^{\lfloor{\frac{b_{f_i}-t_{f_i}}{2}}\rfloor}((-1)^{f_i-1}z)^{b_{f_i}-t_{f_i}-2i}\binom{b_{f_i}-t_{f_i}-i}{i} \\
&& \cdot \prod_{i\in \gamma(f)} \sum_{i=0}^{\lfloor{\frac{b_{f_i}-t_{f_i}-1}{2}}\rfloor}((-1)^{f_i-1}z)^{b_{f_i}-t_{f_i}-1-2i}\binom{b_{f_i}-t_{f_i}-1-i}{i},
\end{eqnarray*}
where 
\begin{itemize}
\item $F$ is the set of all strictly decreasing integer sequences $f=\{f_1, f_2, \ldots, f_l\}$, where $f_1=n, f_i-f_{i+1}=1$ or $2$, $f_l=0$ or $-1$, and only one of $0$ or $-1$ is present in the sequence $f$ (that is, if $f_l=-1$ then $f_{l-1}\neq 0$). Each $f_i=c_i$ or $d_i$, and if $f_i = d_i$ then $f_{i} -f_{i+1} = 1$. For $f_i \in f$, $t_{f_i} = \begin{cases} 1 \,\,\, \text{if} \,\,\, f_{i-1} = d_{i-1}\\
 0 \,\,\, \text{if} \,\,\, f_{i-1} = c_{i-1}.
  \end{cases}$
\item $\lambda(f)=\{i|f_i = c_i \,\, \text{and} \,\,c_i-f_{i+1}=1\}$,
\item $\rho(f)=\{i| f_i = c_i \,\,\text{and} \,\, c_i-f_{i+1}=2\}$,
\item $\gamma(f)=\{i| f_i = d_i \,\, \text{and}\,\, d_i-f_{i+1}=1\}$,
\item $x_{0,0}=1,\,\, x_{-1,0} = z^{-1}a + 1 - z^{-1} a^{-1}$ and $x_{0,-1} = a^{-1}$.
\end{itemize}
\end{theorem}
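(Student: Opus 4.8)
The plan is to recognize that Theorem~\ref{closedform} is essentially a direct substitution of the explicit coefficient polynomials into the purely formal identity already established in Lemma~\ref{sequence}. Lemma~\ref{sequence} expresses $x_{n,b_n}$ as a sum over paths $f\in F$ of a product of $l$-, $r$-, and $p$-coefficients (with a boundary term $x_{f_l,b_{f_l}-t_{f_l}}$), valid for \emph{any} choice of elements $l_{i,j},r_{i,j},p_{i,j}$ in a commutative ring satisfying the recurrence~\eqref{recurrence}. So the bulk of the structural work is done; what remains is to verify that the specific ring elements arising from the Dubrovnik polynomial genuinely satisfy the hypotheses of Lemma~\ref{sequence} and then plug in their closed forms.

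First I would identify $R=\Z[a^{\pm1},z^{\pm1}]$ and set $x_{n,b_n}:=\P[b_1,\ldots,b_{n-1},b_n]$, taking the coefficient elements to be exactly $l_{n,b_n}$, $r_{n,b_n}$, and $p_{n,b_n}$ as defined in~\eqref{eq:l},~\eqref{eq:r}, and~\eqref{eq:p}. The key verification is that with these choices the recurrence~\eqref{recurrence} holds: this is precisely the content of identity~\eqref{eq:sequences} in Remark~\ref{sequence-formula}, which was in turn derived from the two reduction formulas (Theorem~\ref{red} and Theorem~\ref{red2}) applied at $m=b_n$. I would also check the base cases match the prescribed initial conditions, namely $x_{0,0}=1$, $x_{-1,0}=z^{-1}a+1-z^{-1}a^{-1}$, and $x_{0,-1}=a^{-1}$, which were established at the end of Section~\ref{sec:reduction} (with the conventions $b_0=b_{-1}=0$), so that~\eqref{eq:sequences} indeed holds for all $n\in\N$.

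With the hypotheses of Lemma~\ref{sequence} confirmed, the proof reduces to invoking~\eqref{tree-recurrence} and substituting the explicit expressions for the coefficients. Concretely, in the product over $\lambda(f)$ I replace each $l_{f_i,b_{f_i}-t_{f_i}}$ by the double sum in~\eqref{eq:l} with $b_n$ replaced by $b_{f_i}-t_{f_i}$ and $n$ replaced by $f_i$; similarly each $r_{f_i,b_{f_i}-t_{f_i}}$ over $\rho(f)$ becomes the expression in~\eqref{eq:r}, and each $p_{f_i,b_{f_i}-t_{f_i}}$ over $\gamma(f)$ the expression in~\eqref{eq:p}, all with the sign $(-1)^{n-1}$ specialized to $(-1)^{f_i-1}$ since the layer index is now $f_i$. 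This yields verbatim the displayed formula in the statement.

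The main subtlety to address carefully is bookkeeping of the index substitutions, in particular the appearance of $b_{f_i}-t_{f_i}$ in place of the naive $b_{f_i}$. This shift encodes the fact that a $p$-edge (a $d_i$ entry) descends to the diagram $D[b_1,\ldots,b_i-1]$ rather than $D[b_1,\ldots,b_i]$, so that the \emph{next} vertex down a path sees its rightmost block already reduced by one half-twist; the indicator $t_{f_i}$ is exactly the mechanism in Lemma~\ref{sequence} that propagates this correction. Since Lemma~\ref{sequence} already carries the $t_{f_i}$ bookkeeping abstractly, the hard part is purely notational: confirming that the second subscript fed into each coefficient polynomial is $b_{f_i}-t_{f_i}$ and that the boundary factor is $x_{f_l,b_{f_l}-t_{f_l}}$ with the correct initial value selected according to whether $f_l=0$ or $f_l=-1$. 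Once this matching is checked, the theorem follows immediately from Lemma~\ref{sequence}.
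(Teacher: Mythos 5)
Your proposal is correct and follows exactly the paper's own route: the paper proves Theorem~\ref{closedform} by citing Lemma~\ref{sequence} together with Remark~\ref{sequence-formula}, which is precisely the substitution you carry out (verifying the recurrence~\eqref{eq:sequences}, the initial conditions, and the $t_{f_i}$ bookkeeping). Your write-up simply makes explicit the details that the paper leaves implicit in its one-line proof.
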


\begin{proof} The statement follows from Lemma \ref{sequence} and Remark \ref{sequence-formula}. \end{proof}

The Dubrovnik polynomial of a standard braid-form diagram $D[b_1, \ldots, b_n]$ with all $b_i <0$ is obtained from the closed-form formula (given in Theorem~\ref{closedform}) for the Dubrovnik polynomial for the diagram $D[|b_1|, \ldots, |b_n|]$, in which one is applying the replacements $a \longleftrightarrow a^{-1}$ and $z \longleftrightarrow -z$. That is,
\[ \P[b_1, \ldots, b_n] (a, z) = \P[|b_1|, \ldots, |b_n|] (a^{-1}, -z), \,\,\, b_i <0,  \,\, \forall  1 \leq i \leq n.\]

\begin{example} We compute the Dubrovnik polynomial for the standard diagram $D[4,3,5]$ depicted in Figure \ref{435}.
\begin{figure}[ht]
\[\includegraphics[scale=.25]{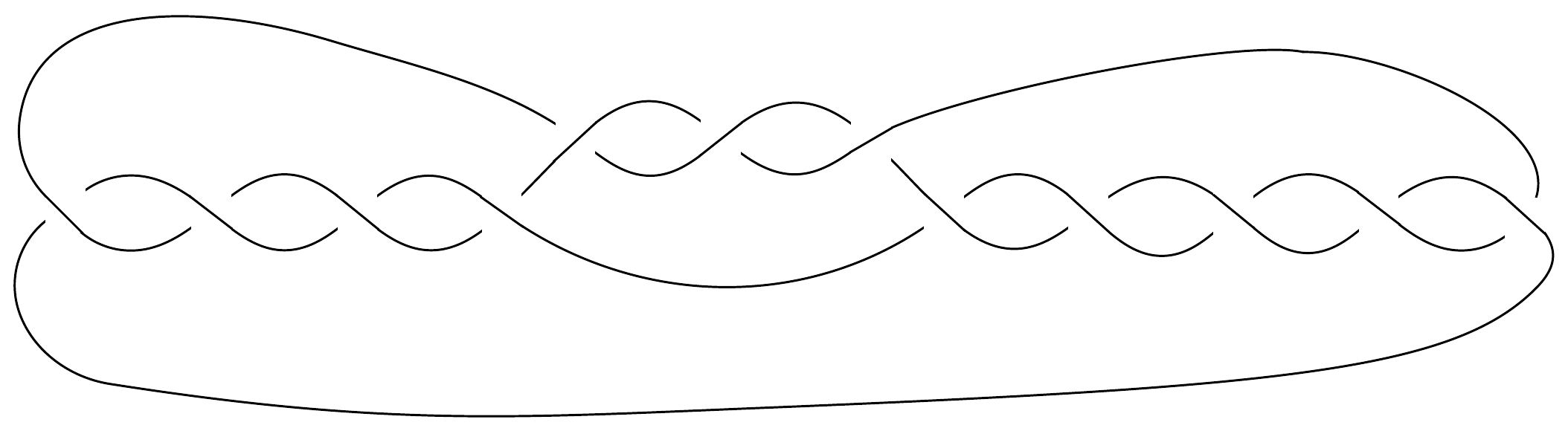}\]
\caption{Standard diagram $D[4,3,5]$ \label{435}}
 \end{figure}

The length of the diagram is $n=3$ and there are $17$ possibilities for the sequences in $F$, which are given in the first column of Table~\ref{table:example}. To differentiate $c_i$'s and $d_i$'s in a sequences $f \in F$, we denote $d_i$'s with a subscript $p$.  In this table, the second and third columns list the terms $c_i \in f$ corresponding to $i \in \lambda(f)$ and $i \in \rho(f)$, respectively. The fourth column lists the terms $d_i \in f$ corresponding to $i \in \gamma(f)$. The last column collects the product of the coefficients corresponding to each branch in the tree; each such product is a term in the Dubrovnik polynomial $\P[4,3,5]$ associated with the standard diagram $D[4,3,5]$.
 
 \begin{table}[ht]
    \begin{tabular}{|c|c|c|c|c|}
\hline
    $f$   & $\lambda(f)\rightarrow \{c_i\}$   & $\rho(f)\rightarrow \{c_i\}$   & $\gamma(f)\rightarrow \{d_i\}$   & products  of coefficients  \\\hline
  \{3,2,1,0\}         & $\{3,2,1\}$ & $\emptyset$       & $\emptyset$                 & $l_{3,5}l_{2, 3}l_{1,4}x_{0,0}$            \\\hline 
 $\{3,2,1,-1\}$      & $\{3,2\}$     & $\{1\}$     & $\emptyset$                 & $l_{3,5}l_{2,3}r_{1,4}x_{-1,0}$         \\\hline 
 $\{3,2,1_p,0\}$     & $\{3,2\}$     & $\emptyset$       & $\{1_p\}$             & $l_{3,5}l_{2,3}p_{1,4}x_{0,-1}$            \\\hline 
 $\{3,2,0\}$         &$ \{3\}$         & $\{2\}$     & $\emptyset$                 & $l_{3,5}r_{2,3}x_{0,0}$               \\\hline 
 $\{3,2_p,1,0\}$     & $\{3,1\}$     & $\emptyset$       & $\{2_p\}$             & $l_{3,5}p_{2,3}l_{1,3}x_{0,0}$        \\\hline 
 $\{3,2_p,1,-1\}$    & $\{3\}$         & $\{1\}$     & $\{2_p\}$             & $l_{3,5}p_{2,3}r_{1,3}x_{-1,0}$     \\\hline 
 $\{3,2_p,1_p,0\}$   & $\{3\}$         & $\emptyset$       & $\{2_p, 1_p\}$      & $l_{3,5}p_{2,3}p_{1,3}x_{0,-1}$        \\\hline 
 $\{3,1,0\}$         & $\{1\}$         & $\{3\}$     & $\emptyset$                 & $r_{3,5}l_{1,4}x_{0,0}$               \\\hline 
 $\{3,1,-1\}$        & $\emptyset$           & $\{3,1\}$ & $\emptyset$                 & $r_{3,5}r_{1,4}x_{-1,0}$            \\\hline 
 $\{3,1_p,0\}$       & $\emptyset$           & $\{3\}$     & $\{1_p\}$             & $r_{3,5}p_{1,4}x_{0,-1}$               \\\hline 
 $\{3_p,2,1,0\}$     & $\{2,1\}$     & $\emptyset$       & $\{3_p\}$             & $p_{3,5}l_{2,2}l_{1,4}x_{0,0}$        \\\hline 
 $\{3_p,2,1,-1\}$    & $\{2\}$         & $\{1\}$     & $\{3_p\}$             & $p_{3,5}l_{2,2}r_{1,4}x_{-1,0}$     \\\hline 
 $\{3_p,2,1_p,0\}$   & $\{2\}$         & $\emptyset$       & $\{3_p,1_p\}$       & $p_{3,5}l_{2,2}p_{1,4}x_{0,-1}$        \\\hline 
 $\{3_p,2,0\}$       & $\emptyset$           & $\{2\}$     & $\{3_p\}$             & $p_{3,5}r_{2,2}x_{0,0}$           \\\hline 
 $\{3_p,2_p,1,0\}$   & $\{1\}$       & $\emptyset$       & $\{3_p,2_p\}$       & $p_{3,5}p_{2,2}l_{1,3}x_{0,0}$    \\\hline 
 $\{3_p,2_p,1,-1\}$  & $\emptyset$           & $\{1\}$     & $\{3_p,2_p\}$       & $p_{3,5}p_{2,2}r_{1,3}x_{-1,0}$ \\\hline 
 $\{3_p,2_p,1_p,0\}$ & $\emptyset$           & $\emptyset$       & $\{3_p,2_p,1_p\}$ & $p_{3,5}p_{2,2}p_{1,3}x_{0,-1}$    \\\hline 
    \end{tabular}
    \vspace{0.2cm}
    \caption{Paths and terms for computing $\P[4,3,5]$} \label{table:example}
\end{table}
The tree that corresponds to a standard diagram of length $3$ is given in Figure~\ref{3poly}. Recall that each sequence $f$ corresponds to a path in the tree starting from the root at level $3$ and ending at a leaf, recording the level for each vertex in the path and writing a subscript $p$ (which marks $d_i$'s) for the vertices where we chose the rightmost edge highlighted in red. 

To see how and why the computational tree works, let the vertices of the tree correspond to the following polynomials, which we evaluate using the formula~\eqref{recurrence} in Lemma \ref{sequence}: 
\begin{eqnarray*}
\P[4,3,5]&=&l_{3,5}\P[4,3]+r_{3,5}\P[4]+p_{3,5}\P[4,2]\\
\P[4,3]&=&l_{2,3}\P[4]+r_{2,3}x_{0,0}+p_{2,3}\P[3]\\
\P[4,2]&=&l_{2,2}\P[4]+r_{2,2}x_{0,0}+p_{2,2}\P[3]\\
\P[4]&=&l_{1,4}x_{0,0}+r_{1,4}x_{-1,0}+p_{1,4}x_{0,-1}\\
\P[3]&=&l_{1,3}x_{0,0}+r_{1,3}x_{-1,0}+p_{1,3}x_{0,-1}.
\end{eqnarray*}
Thus 
\begin{eqnarray*}
\P[4,3]&=&l_{2,3}\P[4]+r_{2,3}x_{0,0}+p_{2,3}\P[3]\\
&=&l_{2,3}(l_{1,4}x_{0,0}+r_{1,4}x_{-1,0}+p_{1,4}x_{0,-1})+r_{2,3}x_{0,0}\\&&+p_{2,3}(l_{1,3}x_{0,0}+r_{1,3}x_{-1,0}+p_{1,3}x_{0,-1})\\
&=&l_{2,3}l_{1,4}x_{0,0}+l_{2,3}r_{1,4}x_{-1,0}+l_{2,3}p_{1,4}x_{0,-1}+r_{2,3}x_{0,0}\\&&+p_{2,3}l_{1,3}x_{0,0}+p_{2,3}r_{1,3}x_{-1,0}+p_{2,3}p_{1,3}x_{0,-1}
\end{eqnarray*}
and
\begin{eqnarray*}
\P[4,2]&=&l_{2,2}\P[4]+r_{2,2}x_{0,0}+p_{2,2}\P[3]\\
&=&l_{2,2}(l_{1,4}x_{0,0}+r_{1,4}x_{-1,0}+p_{1,4}x_{0,-1})+r_{2,2}x_{0,0}\\&&+p_{2,2}(l_{1,3}x_{0,0}+r_{1,3}x_{-1,0}+p_{1,3}x_{0,-1})\\
&=&l_{2,2}l_{1,4}x_{0,0}+l_{2,2}r_{1,4}x_{-1,0}+l_{2,2}p_{1,4}x_{0,-1}+r_{2,2}x_{0,0}\\&&+p_{2,2}l_{1,3}x_{0,0}+p_{2,2}r_{1,3}x_{-1,0}+p_{2,2}p_{1,3}x_{0,-1}.
\end{eqnarray*}
Therefore, we have 
\begin{eqnarray*}
\P[4,3,5]&=&l_{3,5}\P[4,3]+r_{3,5}P(D[4])+p_{3,5}\P[4,2]\\
&=&l_{3,5}l_{2,3}l_{1,4}x_{0,0}+l_{3,5}l_{2,3}r_{1,4}x_{-1,0}+l_{3,5}l_{2,3}p_{1,4}x_{0,-1}\\&&+l_{3,5}r_{2,3}x_{0,0}+l_{3,5}p_{2,3}l_{1,3}x_{0,0}+l_{3,5}p_{2,3}r_{1,3}x_{-1,0}
\\&&+l_{3,5}p_{2,3}p_{1,3}x_{0,-1}+r_{3,5}l_{1,4}x_{0,0}+r_{3,5}r_{1,4}x_{-1,0}\\&&+r_{3,5}p_{1,4}x_{0,-1}+p_{3,5}l_{2,2}l_{1,4}x_{0,0}+p_{3,5}l_{2,2}r_{1,4}x_{-1,0}
\\&&+p_{3,5}l_{2,2}p_{1,4}x_{0,-1}+p_{3,5}r_{2,2}x_{0,0}+p_{3,5}p_{2,2}l_{1,3}x_{0,0}\\&&+p_{3,5}p_{2,2}r_{1,3}x_{-1,0}+p_{3,5}p_{2,2}p_{1,3}x_{0,-1}.
\end{eqnarray*}
Note that this result coincides with the sum of the products in the last column in Table~\ref{table:example}, which are obtained from the computational tree in Figure~\ref{3poly} and its associated paths.
Using the relations~\eqref{eq:r} through~\eqref{eq:l} and the chart of paths, we obtain  
\begin{eqnarray*}
&&\P[4,3,5]=\\
&&-1+\frac{1}{a^4}-2 a^2+3 a^4+\frac{2 z}{a^5}+\frac{z}{a}-5 a z+2 a^3 z-18 z^2+\frac{2 z^2}{a^8}+\frac{3 z^2}{a^6}+\frac{6 z^2}{a^4}\\&&-6 a^2 z^2+13 a^4 z^2+\frac{3 z^3}{a^7}+\frac{12 z^3}{a^5}+\frac{5 z^3}{a^3}
+\frac{5 z^3}{a}-26 a z^3+a^3 z^3-43 z^4+\frac{z^4}{a^8}\\&&+\frac{5 z^4}{a^6}+\frac{17 z^4}{a^4}+\frac{10 z^4}{a^2}-6 a^2 z^4+16 a^4 z^4+\frac{2 z^5}{a^7}+\frac{10 z^5}{a^5}+\frac{12 z^5}{a^3}+\frac{15 z^5}{a}\\&&-33 a z^5-6 a^3 z^5-36z^6+\frac{3 z^6}{a^6}+\frac{12 z^6}{a^4}+\frac{12 z^6}{a^2}+2 a^2 z^6+7 a^4 z^6+\frac{4 z^7}{a^5}\\
&&+\frac{9 z^7}{a^3}+\frac{10 z^7}{a}-18 a z^7-5 a^3 z^7-16 z^8+\frac{4 z^8}{a^4}+\frac{7 z^8}{a^2}+4 a^2 z^8+a^4 z^8\\
&&+\frac{3 z^9}{a^3}+\frac{4 z^9}{a}-6 a z^9-a^3 z^9-3 z^{10}+\frac{2 z^{10}}{a^2}+a^2 z^{10}+\frac{z^{11}}{a}-a z^{11},
\end{eqnarray*} where we used that
\begin{eqnarray*}
x_{0,0}&=&1,\,\,\, x_{-1,0} =z^{-1}a + 1 - z^{-1} a^{-1},  \,\,\, x_{0,-1} = a^{-1}\\
l_{3,5}&=& -1 (z a^{-4} + z^2 a^{-3} + z^3 a^{-2}+ z a^{-2}+ z^4 a^{-1}+ 2 z^2 a^{-1} + z^5 + 3 z^3 + z)\\
r_{3,5} &= &a^3 (z^5 + 4 z^3 + 3 z), \,\,\, p_{3,5}=1 + 3 z^2 + z^4\\
l_{2,3} &=& -1 (-z - z a^2 + z^2 a - z^3), \,\,\, r_{2,3}=-1 (2 z + z^3) a^{-4}, \,\,\, p_{2,3}= 1 + z^2\\
l_{1,4}&=& -1 (z a^{-3} + z^2 a^{-2} + z^3 a^{-1}+ z a^{-1}+ z^4 + 2 z^2)\\
r_{1,4}&=&z^4 + 3 z^2 + 1, \,\,\, p_{1,4} =z^3 + 2 z\\
l_{1,3}&=& -(z + z a^{-2} + z^2 a^{-1} + z^3),\,\,\,  r_{1,3}= 2 z + z^3, \,\,\, p_{1,3}=1 + z^2\\
l_{2,2}&=& z a - z^2, \,\,\, r_{2,2} = a^{-4} (z^2 + 1), \,\,\, p_{2,2} = -z.
\end{eqnarray*}
\end{example}


\section{Appendix} \label{app}

This appendix contains the Mathematica\textsuperscript{\textregistered} code written by the second named author, which computes the Dubrovnik polynomial $\P[b_1, b_2, \dots, b_n]$ based on successive iterations of the Dubrovnik skein relation applied to the left hand side of a standard braid-form diagram of a rational knot, as shown in Section~\ref{sec:Dub}.
\begin{verbatim}
Clear[f, a, c, d, b1, z, b2, b3, b4, b5, btail, i]
f[1, {1}] = c;
f[1, {2}] = d;
f[1, {b1_}] := 
 f[1, {b1}] = f[1, {b1 - 2 }] - z (a^(1 - b1) - f[1, {b1 - 1}])

f[2, {1, 1}] := d;
f[2, {1, 2}] := f[2, {1, 2}] = a^(-1) - z (d - a^2);
f[2, {2, 1}] := f[2, {2, 1}] = a - z a^(-2) + z (f[2, {1, 1}]);
f[2, {1, b2_}] := 
  f[2, {1, b2}] = f[2, {1, b2 - 2}] - z (f[2, {1, b2 - 1}] - a^b2);
f[2, {2, b2_}] :=  f[2, {2, b2}] = 
  a^b2 - z (a^(-1) f[2, {1, b2 - 1}] - f[2, {1, b2}]);
f[2, {b1_, 1}] := f[2, {b1, 1}] = f[1, {b1 + 1}];
f[2, {b1_, b2_}] :=  f[2, {b1, b2}] = 
  f[2, {b1 - 2, b2}] -  z (a^(1 - b1) f[2, {1, b2 - 1}] 
   - f[2, {b1 - 1, b2}])

f[3, {1, 1, b3_}] :=  f[3, {1, 1, b3}] = 
  a^(-b3) - z (f[1, {b3 + 1}] - a f[1, {b3}]);
f[3, {1, 2, b3_}] := f[3, {1, 2, b3}] = 
   f[1, {b3 + 1}] - z (f[3, {1, 1, b3}] - a^2 f[1, {b3}]);
f[3, {2, 1, b3_}] :=  f[3, {2, 1, b3}] = 
   a f[1, {b3}] - z (a^(-1) f[1, {b3 + 1}] - f[3, {1, 1, b3}]);
f[3, {1, b2_, b3_}] :=  f[3, {1, b2, b3}] = 
   f[3, {1, b2 - 2, b3}] - z (f[3, {1, b2 - 1, b3}] 
    - a^b2 f[1, {b3}]);
f[3, {2, b2_, b3_}] :=  f[3, {2, b2, b3}] = 
   a^b2 f[1, {b3}] - z (a^(-1) f[3, {1, b2 - 1, b3}] 
    - f[3, {1, b2, b3}]);
f[3, {b1_, 1, b3_}] :=  f[3, {b1, 1, b3}] = 
   f[3, {b1 - 2, 1, b3}] - z (a^(1 - b1) f[1, {b3 + 1}] 
     - f[3, {b1 - 1, 1, b3}]);
f[3, {b1_, b2_, b3_}] :=  f[3, {b1, b2, b3}] = 
  f[3, {b1 - 2, b2, b3}] - z (a^(-b1 + 1) f[3, {1, b2 - 1, b3}] 
   - f[3, {b1 - 1, b2, b3}])

f[4, {1, 1, b3_, 1}] :=  f[4, {1, 1, b3, 1}] = 
   a^(-1 - b3) - z (f[2, {b3 + 1, 1}] - a f[2, {b3, 1}]);
f[4, {1, 1, b3_, b4_}] :=  f[4, {1, 1, b3, b4}] = 
   a^(-b3) f[2, {1, b4 - 1}] - z (f[2, {b3 + 1, b4}] 
    - a f[2, {b3, b4}]);
f[4, {2, 1, b3_, b4_}] :=  f[4, {2, 1, b3, b4}] = 
   a f[2, {b3, b4}] - z (a^(-1) f[2, {b3 + 1, b4}] 
    - f[4, {1, 1, b3, b4}]);
f[4, {1, 2, b3_, b4_}] := f[4, {1, 2, b3, b4}] = 
   f[2, {b3 + 1, b4}] - z (f[4, {1, 1, b3, b4}] 
    - a^2 f[2, {b3, b4}]);
f[4, {1, b2_, b3_, b4_}] :=  f[4, {1, b2, b3, b4}] = 
   f[4, {1, b2 - 2, b3, b4}] -  z (f[4, {1, b2 - 1, b3, b4}] 
    - a^b2 f[2, {b3, b4}]);
f[4, {2, b2_, b3_, b4_}] :=  f[4, {2, b2, b3, b4}] = 
   a^b2 f[2, {b3, b4}] - z (a^(-1) f[4, {1, b2 - 1, b3, b4}] 
    - f[4, {1, b2, b3, b4}]);
f[4, {b1_, 1, b3_, b4_}] :=  f[4, {b1, 1, b3, b4}] = 
   f[4, {b1 - 2, 1, b3, b4}] - z (a^(1 - b1) f[2, {b3 + 1, b4}] 
    - f[4, {b1 - 1, 1, b3, b4}]);
f[4, {b1_, b2_, b3_, b4_}] :=  f[4, {b1, b2, b3, b4}] = 
  f[4, {b1 - 2, b2, b3, b4}] 
   - z (a^(-b1 + 1) f[4, {1, b2 - 1, b3, b4}] 
     - f[4, {b1 - 1, b2, b3, b4}])

f[5, {1, 1, b3_, 1, b5_}] :=  f[5, {1, 1, b3, 1, b5}] = 
   a^(-b3) f[1, {1 + b5}] - z (f[3, {b3 + 1, 1, b5}] 
    - a f[3, {b3, 1, b5}]);
f[5, {1, 1, b3_, b4_, b5_}] := f[5, {1, 1, b3, b4, b5}] = 
   a^(-b3) f[3, {1, b4 - 1, b5}] - z (f[3, {b3 + 1, b4, b5}] 
    - a f[3, {b3, b4, b5}]);
f[5, {2, 1, b3_, b4_, b5_}] :=  f[5, {2, 1, b3, b4, b5}] = 
   a f[3, {b3, b4, b5}] - z (a^(-1) f[3, {b3 + 1, b4, b5}] 
    - f[5, {1, 1, b3, b4, b5}]);
f[5, {1, 2, b3_, b4_, b5_}] := f[5, {1, 2, b3, b4, b5}] = 
   f[3, {b3 + 1, b4, b5}] - z (f[5, {1, 1, b3, b4, b5}] 
    - a^2 f[3, {b3, b4, b5}]);
f[5, {1, b2_, b3_, b4_, b5_}] :=  f[5, {1, b2, b3, b4, b5}] = 
   f[5, {1, b2 - 2, b3, b4, b5}] - z (f[5, {1, b2 - 1, b3, b4, b5}] 
    - a^b2 f[3, {b3, b4, b5}]);
f[5, {2, b2_, b3_, b4_, b5_}] :=   f[5, {2, b2, b3, b4, b5}] = 
   a^b2 f[3, {b3, b4, b5}] - z (a^(-1) f[5, {1, b2 - 1, b3, b4, b5}] 
     - f[5, {1, b2, b3, b4, b5}]);
f[5, {b1_, 1, b3_, b4_, b5_}] :=  f[5, {b1, 1, b3, b4, b5}] = 
   f[5, {b1 - 2, 1, b3, b4, b5}] 
    - z (a^(1 - b1) f[3, {b3 + 1, b4, b5}]  
     - f[5, {b1 - 1, 1, b3, b4, b5}]);
f[5, {b1_, b2_, b3_, b4_, b5_}] :=  f[5, {b1, b2, b3, b4, b5}] = 
  f[5, {b1 - 2, b2, b3, b4, b5}]  
   - z (a^(-b1 + 1) f[5, {1, b2 - 1, b3, b4, b5}] 
     - f[5, {b1 - 1, b2, b3, b4, b5}])

f[i_, {1, 1, b3_, 1, b5_, btail__}] :=  
f[i, {1, 1, b3, 1, b5, btail}] =  a^(-b3) f[i - 4, {1 + b5, btail}]  
  - z (f[i - 2, {b3 + 1, 1, b5, btail}] 
    - a f[i - 2, {b3, 1, b5, btail}]);
f[i_, {1, 1, b3_, b4_, btail__}] :=  f[i, {1, 1, b3, b4, btail}] = 
   a^(-b3) f[i - 2, {1, b4 - 1, btail}] 
    - z (f[i - 2, {b3 + 1, b4, btail}] 
     - a f[i - 2, {b3, b4, btail}]);
f[i_, {2, 1, b3_, btail__}] :=  f[i, {2, 1, b3, btail}] = 
   a f[i - 2, {b3, btail}] - z (a^(-1) f[i - 2, {b3 + 1, btail}] 
    - f[i, {1, 1, b3, btail}]);
f[i_, {1, 2, b3_, btail__}] :=  f[i, {1, 2, b3, btail}] = 
   f[i - 2, {b3 + 1, btail}] - z (f[i, {1, 1, b3, btail}] 
    - a^2 f[i - 2, {b3, btail}]);
f[i_, {1, b2_, btail__}] := f[i, {1, b2, btail}] = 
   f[i, {1, b2 - 2, btail}] -  z (f[i, {1, b2 - 1, btail}] 
     - a^b2 f[i - 2, {btail}]);
f[i_, {2, b2_, btail__}] := f[i, {2, b2, btail}] = 
   a^b2 f[i - 2, {btail}]  
    - z (a^(-1) f[i, {1, b2 - 1, btail}] - f[i, {1, b2, btail}]);
f[i_, {b1_, 1, b3_, btail__}] :=   f[i, {b1, 1, b3, btail}] = 
   f[i, {b1 - 2, 1, b3, btail}] 
    - z (a^(1 - b1) f[i - 2, {b3 + 1, btail}]  
     -  f[i, {b1 - 1, 1, b3, btail}]);
f[i_, {b1_, b2_, btail__}] :=  f[i, {b1, b2, btail}] = 
  f[i, {b1 - 2, b2, btail}] 
   - z (a^(-b1 + 1) f[i, {1, b2 - 1, btail}] 
     - f[i, {b1 - 1, b2, btail}])
\end{verbatim}

\begin{verbatim}
c := a;
e := a^(-1);
d := z^(-1) a - z^(-1) a^(-1) + 1 - z a^(-1) + z a;
\end{verbatim}

The code for negative $b_i$'s is given below.

\begin{verbatim}
Clear[f, a, c, d, b1, z, b2, b3, b4, b5, btail, i]
f[1, {-1}] = e;
f[1, {-2}] = d;
f[1, {b1_}] := 
 f[1, {b1}] = f[1, {b1 + 2 }] + z (a^(-1 - b1) - f[1, {b1 + 1}])

f[2, {-1, -1}] := d;
f[2, {-1, -2}] := f[2, {-1, -2}] = a + z (d - a^(-2));
f[2, {-2, -1}] := f[2, {-2, -1}] = a^(-1) 
	+ z (a^2 - f[2, {-1, -1}]);
f[2, {-1, b2_}] := f[2, {-1, b2}] = 
f[2, {-1, b2 + 2}] + z (f[2, {-1, b2 + 1}] - a^b2);
f[2, {-2, b2_}] := f[2, {-2, b2}] = 
a^b2 + z (a f[2, {-1, b2 + 1}] - f[2, {-1, b2}]);
f[2, {b1_, -1}] := f[2, {b1, -1}] = f[1, {b1 - 1}];
f[2, {b1_, b2_}] :=  f[2, {b1, b2}] = 
  f[2, {b1 + 2, b2}] + z (a^(-1 - b1) f[2, {-1, b2 + 1}] 
   - f[2, {b1 + 1, b2}])

f[3, {-1, -1, b3_}] :=   f[3, {-1, -1, b3}] = 
   a^(-b3) + z (f[1, {b3 - 1}] - a^(-1) f[1, {b3}]);
f[3, {-1, -2, b3_}] :=  f[3, {-1, -2, b3}] = 
   f[1, {b3 - 1}] + z (f[3, {-1, -1, b3}] - a^(-2) f[1, {b3}]);
f[3, {-2, -1, b3_}] :=  f[3, {-2, -1, b3}] = 
   a^(-1) f[1, {b3}] + z (a f[1, {b3 - 1}] - f[3, {-1, -1, b3}]);
f[3, {-1, b2_, b3_}] :=  f[3, {-1, b2, b3}] = 
   f[3, {-1, b2 + 2, b3}] + z (f[3, {-1, b2 + 1, b3}] 
    - a^b2 f[1, {b3}]);
f[3, {-2, b2_, b3_}] := f[3, {-2, b2, b3}] = 
   a^b2 f[1, {b3}] + z (a f[3, {-1, b2 + 1, b3}] 
    - f[3, {-1, b2, b3}]);
f[3, {b1_, -1, b3_}] :=  f[3, {b1, -1, b3}] = 
   f[3, {b1 + 2, -1, b3}] + z (a^(-1 - b1) f[1, {b3 - 1}] 
    - f[3, {b1 + 1, -1, b3}]);
f[3, {b1_, b2_, b3_}] :=  f[3, {b1, b2, b3}] = 
  f[3, {b1 + 2, b2, b3}] + z (a^(-b1 - 1) f[3, {-1, b2 + 1, b3}] 
   - f[3, {b1 + 1, b2, b3}])

f[4, {-1, -1, b3_, -1}] :=  f[4, {-1, -1, b3, -1}] = 
   a^(1 - b3) + z (f[2, {b3 - 1, -1}] - a^(-1) f[2, {b3, -1}]);
f[4, {-1, -1, b3_, b4_}] :=   f[4, {-1, -1, b3, b4}] = 
   a^(-b3) f[2, {-1, b4 + 1}] + z (f[2, {b3 - 1, b4}] 
    - a^(-1) f[2, {b3, b4}]);
f[4, {-2, -1, b3_, b4_}] :=   f[4, {-2, -1, b3, b4}] = 
   a^(-1) f[2, {b3, b4}] + z (a f[2, {b3 - 1, b4}] 
   - f[4, {-1, -1, b3, b4}]);
f[4, {-1, -2, b3_, b4_}] :=  f[4, {-1, -2, b3, b4}] = 
   f[2, {b3 - 1, b4}] + z (f[4, {-1, -1, b3, b4}] 
    - a^(-2) f[2, {b3, b4}]);
f[4, {-1, b2_, b3_, b4_}] :=  f[4, {-1, b2, b3, b4}] = 
   f[4, {-1, b2 + 2, b3, b4}] 
    + z (f[4, {-1, b2 + 1, b3, b4}] - a^b2 f[2, {b3, b4}]);
f[4, {-2, b2_, b3_, b4_}] :=  f[4, {-2, b2, b3, b4}] = 
   a^b2 f[2, {b3, b4}] + z (a f[4, {-1, b2 + 1, b3, b4}] 
    - f[4, {-1, b2, b3, b4}]);
f[4, {b1_, -1, b3_, b4_}] := f[4, {b1, -1, b3, b4}] = 
  f[4, {b1 + 2, -1, b3, b4}] 
   + z(a^(-1 - b1) f[2, {b3 - 1, b4}]
    - f[4, {b1 + 1, -1, b3, b4}]);
f[4, {b1_, b2_, b3_, b4_}] :=  f[4, {b1, b2, b3, b4}] = 
  f[4, {b1 + 2, b2, b3, b4}]  
   + z (a^(-b1 - 1) f[4, {-1, b2 + 1, b3, b4}]  
    - f[4, {b1 + 1, b2, b3, b4}])

f[5, {-1, - 1, b3_, -1, b5_}] :=  f[5, {-1, -1, b3, -1, b5}] = 
   a^(-b3) f[1, {b5 - 1}] + z (f[3, {b3 - 1, -1, b5}] 
    - a^(-1) f[3, {b3, -1, b5}]);
f[5, {-1, -1, b3_, b4_, b5_}] := f[5, {-1, -1, b3, b4, b5}] = 
   a^(-b3) f[3, {-1, b4 + 1, b5}] + z (f[3, {b3 - 1, b4, b5}] 
    - a^(-1) f[3, {b3, b4, b5}]);
f[5, {-2, -1, b3_, b4_, b5_}] :=  f[5, {-2, -1, b3, b4, b5}] = 
   a^(-1) f[3, {b3, b4, b5}] + z (a f[3, {b3 - 1, b4, b5}] 
    - f[5, {-1, -1, b3, b4, b5}]);
f[5, {-1, -2, b3_, b4_, b5_}] :=  f[5, {-1, -2, b3, b4, b5}] = 
   f[3, {b3 - 1, b4, b5}] 
   + z (f[5, {-1, -1, b3, b4, b5}] - a^(-2) f[3, {b3, b4, b5}]);
f[5, {-1, b2_, b3_, b4_, b5_}] :=  f[5, {-1, b2, b3, b4, b5}] = 
   f[5, {-1, b2 + 2, b3, b4, b5}] + z (f[5, {-1, b2 + 1, b3, b4, b5}] 
    - a^b2 f[3, {b3, b4, b5}]);
f[5, {-2, b2_, b3_, b4_, b5_}] :=  f[5, {-2, b2, b3, b4, b5}] = 
   a^b2 f[3, {b3, b4, b5}] + z (a f[5, {-1, b2 + 1, b3, b4, b5}]
    - f[5, {-1, b2, b3, b4, b5}]);
f[5, {b1_, -1, b3_, b4_, b5_}] := f[5, {b1, -1, b3, b4, b5}] = 
  f[5, {b1 + 2, -1, b3, b4, b5}] 
   + z (a^(-1 - b1) f[3, {b3 - 1, b4, b5}]  
    - f[5, {b1 + 1, -1, b3, b4, b5}]);
f[5, {b1_, b2_, b3_, b4_, b5_}] :=  f[5, {b1, b2, b3, b4, b5}] = 
  f[5, {b1 + 2, b2, b3, b4, b5}] 
  + z (a^(-b1 - 1) f[5, {-1, b2 + 1, b3, b4, b5}] 
   - f[5, {b1 + 1, b2, b3, b4, b5}])

f[i_, {-1, -1, b3_, -1, b5_, btail__}] := 
f[i, {-1, -1, b3, -1, b5, btail}]= a^(-b3) f[i - 4, {b5 - 1, btail}] 
 + z (f[i - 2, {b3 - 1, -1, b5, btail}] 
  - a^(-1) f[i - 2, {b3, -1, b5, btail}]);
f[i_, {-1, -1, b3_, b4_, btail__}] :=  f[i, {-1, -1, b3, b4, btail}] = 
  a^(-b3) f[i - 2, {-1, b4 + 1, btail}] 
   + z (f[i - 2, {b3 - 1, b4, btail}] 
    - a^(-1) f[i - 2, {b3, b4, btail}]);
f[i_, {-2, -1, b3_, btail__}] := f[i, {-2, -1, b3, btail}] = 
   a^(-1) f[i - 2, {b3, btail}] + z (a f[i - 2, {b3 - 1, btail}] 
    - f[i, {-1, -1, b3, btail}]);
f[i_, {-1, -2, b3_, btail__}] :=  f[i, {-1, -2, b3, btail}] = 
   f[i - 2, {b3 - 1, btail}] + z (f[i, {-1, -1, b3, btail}] 
    - a^(-2) f[i - 2, {b3, btail}]);
f[i_, {-1, b2_, btail__}] :=   f[i, {-1, b2, btail}] = 
   f[i, {-1, b2 + 2, btail}] + z (f[i, {-1, b2 + 1, btail}] 
    - a^b2 f[i - 2, {btail}]);
f[i_, {-2, b2_, btail__}] :=   f[i, {-2, b2, btail}] = 
   a^b2 f[i - 2, {btail}] 
    + z (a f[i, {-1, b2 + 1, btail}] - f[i, {-1, b2, btail}]);
f[i_, {b1_, -1, b3_, btail__}] :=   f[i, {b1, -1, b3, btail}] = 
  f[i, {b1 + 2, -1, b3, btail}] 
   + z (a^(-1 - b1) f[i - 2, {b3 - 1, btail}] 
    - f[i, {b1 + 1, -1, b3, btail}]);
f[i_, {b1_, b2_, btail__}] :=  f[i, {b1, b2, btail}] = 
  f[i, {b1 + 2, b2, btail}] + z (a^(-b1 - 1) f[i, {-1, b2 + 1, btail}] 
   - f[i, {b1 + 1, b2, btail}]).

\end{verbatim}


\end{document}